\title[TCMC, idempotent ideals, and the transfinite radical]{Telescope conjecture, idempotent ideals, and the transfinite radical}
\date{\today}
\author{Jan \v S\v tov\'\i\v cek}
\address{Institutt for matematiske fag, Norges teknisk-naturvitenskapelige universitet \\ 
N-7491 Trondheim, Norway}
\email{stovicek@math.ntnu.no}
\subjclass[2000]{18E35 (primary), 16E30, 16G60 (secondary)}
\keywords{Telescope conjecture, idempotent ideals, transfinite radical}
\thanks{The author was supported by the Research Council of Norway through Storforsk-project Homological and geometric methods in algebra, and also by the grant GA\v CR 201/05/H005 and the research project MSM 0021620839.}
\renewcommand{\iff}{if and only if }
\newcommand{\st}{such that }
\newcommand{\Z}{\mathbb{Z}}
\newcommand{\Q}{\mathbb{Q}}
\newcommand{\Modl}{\mathrm{Mod}\,}
\newcommand{\stModl}{\mathrm{\underline{Mod}}\,}
\newcommand{\modl}{\mathrm{mod}\,}
\newcommand{\modmodl}[1]{{\mathrm{fp}(\mathrm{mod}\,{#1}, \Ab)}}
\newcommand{\stmodl}{\mathrm{\underline{mod}}\,}
\newcommand{\ModL}{{\mathrm{Mod}\,\Lambda}}
\newcommand{\stModL}{\mathrm{\underline{Mod}}\,\Lambda}
\newcommand{\modL}{{\mathrm{mod}\,\Lambda}}
\newcommand{\modmodL}{{\mathrm{fp}(\mathrm{mod}\,\Lambda, \Ab)}}
\DeclareMathOperator{\Hom}{Hom}
\DeclareMathOperator{\End}{End}
\DeclareMathOperator{\Ext}{Ext}
\DeclareMathOperator{\Ker}{Ker}
\DeclareMathOperator{\Img}{Im}
\DeclareMathOperator{\rad}{rad}
\newcommand{\radL}{\rad_\Lambda}
\newcommand{\spanned}[1]{\langle #1 \rangle}
\DeclareMathOperator{\KGdim}{KGdim}
\newcommand{\A}{\mathcal{A}}
\newcommand{\B}{\mathcal{B}}
\newcommand{\C}{\mathcal{C}}
\newcommand{\clS}{\mathcal{S}}
\newcommand{\T}{\mathcal{T}}
\newcommand{\X}{\mathcal{X}}
\newcommand{\add}{\mathrm{add}}
\newcommand{\Ab}{\mathrm{Ab}}
\theoremstyle{plain}
\newtheorem{thm}{Theorem}
\newtheorem{prop}[thm]{Proposition}
\newtheorem{lem}[thm]{Lemma}
\newtheorem{cor}[thm]{Corollary}
\newtheorem*{conj}{Conjecture}
\theoremstyle{definition}
\newtheorem{defn}[thm]{Definition}
\theoremstyle{remark}
\newtheorem*{rem}{Remark}
\newtheorem{expl}[thm]{Example}
\begin{document}
\begin{abstract}
We show that for an artin algebra $\Lambda$, the telescope conjecture for module categories is equivalent to certain idempotent ideals of $\modL$ being generated by identity morphisms. As a consequence, we prove the conjecture for domestic standard selfinjective algebras and domestic special biserial algebras. We achieve this by showing that in any Krull-Schmidt category with local d.c.c.\ on ideals, any idempotent ideal is generated by identity maps and maps from the transfinite radical.

\end{abstract}
\maketitle

\section*{Introduction}

The aim of this paper is to further develop and apply connections between seemingly rather different topics in algebra:
\begin{enumerate}
\item localizations of triangulated compactly generated categories;
\item theory of cotorsion pairs and induced approximations;
\item the structure of idempotent ideals in a module category;
\item representation type of a finite dimensional algebra.
\end{enumerate}

The main motivation for this paper was point $(1)$, the study of so called smashing localizations in triangulated compactly generated categories. There is an important conjecture, the telescope conjecture, which roughly says that any smashing localization of a compactly generated triangulated category comes from a set of compact objects. For an extensive study of this problem and explanation of the terminology we refer to work by Krause~\cite{K1,K2}. Even though the conjecture is known to be false in this generality---see \cite{Kel} for a simple algebraic counterexample---it is not resolved for many particular important settings. Such special solutions would still have significant consequences. In the case of unbounded derived categories of rings, this is discussed in~\cite{K2}.

In this paper, we will focus on another setting. Let $R$ be a quasi-Frobenius ring (that is, the projective and injective left modules coincide), and $\stModl{R}$ be the stable module category of left $R$-modules. Then $\stModl{R}$ is a triangulated compactly generated category in the sense of~\cite{K1,K2}. If, moreover, $R$ is a self-injective artin algebra, the telescope conjecture has been translated by Krause and Solberg~\cite{KrS} to a statement about modules, or more precisely about certain cotorsion pairs of modules. The precise statements and explanation of terminology are given below. Recently, a positive solution to the telescope conjecture for stable module categories over finite group algebras was annouced by the authors of~\cite{BIK}. Their methods are, however, closely tied to group algebras and do not allow direct generalization to other self-injective artin algebras. We will develop an alternative approach.

The above mentioned version of the telescope conjecture for cotorsion pairs of modules from~\cite[\S 7]{KrS} makes sense not only for self-injective artin algebras, but in fact for any associative ring with unit, leading to a problem in homological algebra which is of interest by itself (cf.~\cite{AST,SS}). Even though one loses the translation to triangulated categories, similarities between the new and the original settings are striking and have been analyzed more in detail in~\cite{SS}.

In the present paper, we further develop the approach from~\cite{SS} and show that the telescope conjecture for module categories depends on the structure of certain idempotent ideals of the category of finitely presented modules. This is another analogy to so called exact ideals from~\cite{K2}. Further, we prove that the structure of idempotent ideals in the category of finitely presented modules over an artin algebra, as well as in many other categories studied by representation theory, heavily depends on idempotent ideals inside the radical. In particular, if there are no non-zero idempotent ideals in the radical, we get a positive answer to the telescope conjecture.

The condition of no non-zero idempotent ideals in the radical of the module category seems to be closely related to the domestic representation type. These notions were proved to coincide for special biserial algebras by Schr\"oer~\cite{S,PS}. A stronger but closely related condition when the infinite radical is nilpotent was studied by several authors, see for example~\cite{KeS,Sk,CMMS1,CMMS2}. Our main interest in the existing results stems from the fact that they provide us with non-trivial examples of artin algebras over which the telescope conjecture for module categories holds. Some of them, coming from a paper by Skowro\'nski and Kerner~\cite{KeS}, are self-injective, thus allowing us to go all the way back and get a statement about smashing localizations of their stable module categories.

Another condition which seems to be closely related to both the domestic representation type and vanishing of the transfinite radical is that of the Krull-Gabriel dimension of an artin algebra being an ordinal number. The concept of the Krull-Gabriel dimension of a ring $R$ can be interpreted as a measure for complexity of both the category $\modmodl R$ of finitely presented additive functors $\modl R \to \Ab$, and the lattice of primitive positive formulas over $R$. Using a result from~\cite{K3}, we prove that the telescope conjecture for module categories holds true if the Krull-Gabriel dimension of the artin algebra in question is an ordinal number.

\medskip

The author would like to thank \O{}yvind Solberg for several helpful discussions. The author is also grateful to Otto Kerner for his comments on idempotent ideals in the radical of some module categories and communicating the unpublished result by Dieter Vossieck mentioned in Section~\ref{sec:rad_tr}.

\section{Preliminaries}
\label{sec:prelim}

In this text, $\Lambda$ will always be an artin algebra and all modules will be left $\Lambda$-modules. Let us denote by $\ModL$ the category of all modules and by $\modL$ the full subcategory of finitely generated modules. Some results in this paper will be proved for more general categories: Krull-Schmidt categories with local d.c.c.\ on ideals as defined in Section~\ref{sec:idemp}. This setting includes $\modL$, derived bounded categories, categories of coherent sheaves, and other categories of representation theoretic significance. A reader who is not interested in the full generality can, nevertheless, read the corresponding statements as if they were stated for $\modL$.

A \emph{cotorsion pair} in $\ModL$ is a pair $(\A,\B)$ of full subcategories of $\ModL$ \st $\A = \Ker\Ext^1_\Lambda(-,\B)$ and $\B = \Ker\Ext^1_\Lambda(\A,-)$. A cotorsion pair is called \emph{hereditary} if in addition $\Ext^i_\Lambda(\A,\B)=0$ for all $i \ge 2$. This paper deals with the telescope conjecture for module categories (TCMC) as formulated in~\cite[Conjecture 7.9]{KrS}. Actually, we slightly alter the assumptions---we require the cotorsion pair in question to be hereditary (since the cotorsion pairs of interest in~\cite{KrS} always are) and relax the condition that~\cite{KrS} imposes on the class $\A$ of the cotorsion pair. We state the conjecture as follows:

\begin{conj}[A] \label{conj:A}
Let $\Lambda$ be an artin algebra and let $(\A,\B)$ be a hereditary cotorsion pair in $\ModL$ \st $\B$ is closed under taking filtered colimits. Then every module in $\A$ is a colimit of a filtered system of finitely generated modules from $\A$.
\end{conj}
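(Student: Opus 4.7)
The plan is to reduce Conjecture~A to showing that the hereditary cotorsion pair $(\A,\B)$ is \emph{of finite type}, meaning that $\A = \Ker\Ext^1_\Lambda(\clS,-)$ for some set $\clS$ of modules in $\A \cap \modL$. Once this is granted, the combination of Eklof's lemma and the Hill lemma (standard technology for cotorsion pairs) yields that every $M \in \A$ is a transfinite extension of modules from $\clS$, and a cardinal induction together with the Hill lemma realizes $M$ as a directed union of finitely generated submodules which are themselves $\clS$-filtered. Since such finitely generated $\clS$-filtered modules lie in $\A$, this directed union is precisely the required filtered colimit of finitely generated modules from $\A$.

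The starting point is to exploit the closure of $\B$ under filtered colimits together with the Ext-vanishing conditions defining a hereditary cotorsion pair. From these I would argue that $\B$ is a \emph{definable} subcategory of $\ModL$, i.e.\ closed also under products and pure submodules, using that pure-injective envelopes computed inside $\ModL$ stay in $\B$ and that pure short exact sequences split when tested against $\A$. Classical results of Crawley-Boevey, Herzog, and Prest then describe $\B$ by a closed subset of the Ziegler spectrum and, equivalently, by vanishing of a collection of morphisms between finitely presented functors in $\modmodL$ --- which is exactly the kind of data controlled by an idempotent ideal of $\modL$.

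Next, I would use the correspondence, set up earlier in the paper, between $(\A,\B)$ and an idempotent ideal $\mathcal{I}$ of $\modL$: definability of $\B$ should force $\mathcal{I}$ to be generated by identity morphisms on finitely generated modules lying in $\A$, producing the desired generating set $\clS$. This is the step where I expect the main obstacle. In full generality an idempotent ideal of $\modL$ may be concentrated in the transfinite radical, and the paper rules this out only under extra finiteness assumptions (domestic standard selfinjective or domestic special biserial algebras, or Krull--Gabriel dimension an ordinal). To extend the argument to an arbitrary artin algebra, one would have to show that an idempotent ideal of $\modL$ corresponding to a cotorsion class $\B$ closed under filtered colimits cannot contribute any nonzero morphism in the transfinite radical; a plausible but currently open route is a functorial vanishing argument inside $\modmodL$ showing that transfinite-radical morphisms in $\modL$ cannot detect a definable class.

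In summary, the deconstructibility machinery reduces Conjecture~A to the statement that every idempotent ideal associated with such a cotorsion pair is generated by identities on finitely generated modules in $\A$; the genuinely hard part, which is the content of the conjecture and is not known in general, is to preclude contributions from the transfinite radical. The present paper's contribution is to give this reduction a precise form and to verify the transfinite-radical vanishing in all currently accessible cases.
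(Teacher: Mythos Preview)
The statement is a \emph{conjecture}, and the paper does not prove it in general; it proves it only under the extra hypothesis $\radL^*=0$ (Theorem~\ref{thm:tcmc_rad0}) or $\KGdim\Lambda<\infty$ (Corollary~\ref{cor:tcmc_kgdim}). You clearly recognise this and your final paragraph is an accurate summary of what the paper actually achieves, so there is no claimed ``proof'' to compare with.

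On the reduction step your route diverges from the paper's. You propose to pass through definability of $\B$, the Ziegler spectrum, and the functor category $\modmodL$ to produce the idempotent ideal. The paper does none of this: it simply imports from \cite{SS} (Proposition~\ref{prop:cnt_tcmc}) that $(\A,\B)$ is of countable type, that $\B=\Ker\Ext^1_\Lambda(\mathfrak I,-)$ for the ideal $\mathfrak I$ of morphisms in $\modL$ factoring through $\A$, and that countably generated modules in $\A$ are countable direct limits along maps in $\mathfrak I$. Idempotency of $\mathfrak I$ (Lemma~\ref{lem:tcmc_idemp}) then follows from this last description together with the Hill lemma, not from any Ziegler-spectrum argument. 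Both routes land on the same translation, Proposition~\ref{prop:tcmc_idemp}: finite type $\Leftrightarrow$ $\mathfrak I$ generated by identities. Your detour is plausible but more elaborate than what the paper needs.

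Two corrections. First, your definition of finite type is misstated: it is $\B=\Ker\Ext^1_\Lambda(\clS,-)$ for a set $\clS\subseteq\modL$, not $\A=\Ker\Ext^1_\Lambda(\clS,-)$. Second, the sentence ``definability of $\B$ should force $\mathcal I$ to be generated by identity morphisms'' is precisely what is \emph{not} known; the paper shows only that closure of $\B$ under direct limits forces $\mathfrak I$ to be idempotent, and Corollary~\ref{cor:idemp} then says such an ideal is generated by identities \emph{together with} morphisms in $\radL^*$. Eliminating the $\radL^*$ contribution is exactly the open content of the conjecture, as you say a few lines later.
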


Note that, in view of~\cite[Theorem 1.5]{AR}, we can equivalently replace filtered colimits by direct limits in the statement above. We say that a cotorsion pair $(\A,\B)$ in $\ModL$ is of \emph{finite type} if $\B = \Ker\Ext^1_\Lambda(\clS,-)$ for a set $\clS$ of finitely generated modules. Similarly, we define $(\A,\B)$ to be of \emph{countable type} if we can take $\clS$ to be a set of countably generated modules. With this definition we can for any particular algebra $\Lambda$ equivalently restate Conjecture (A) as follows, see~\cite[Corollary 4.6]{AST}:

\begin{conj}[B] \label{conj:B}
Let $\Lambda$ be an artin algebra and let $(\A,\B)$ be a hereditary cotorsion pair in $\ModL$ \st $\B$ is closed under taking direct limits. Then $(\A,\B)$ is of finite type.
\end{conj}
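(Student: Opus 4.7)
The plan is to reduce Conjecture (B) to a question about idempotent ideals in $\modL$ and then apply the paper's structural result about Krull-Schmidt categories with local d.c.c.\ on ideals. Fix a hereditary cotorsion pair $(\A,\B)$ with $\B$ closed under direct limits. First I would use the approximation theory of cotorsion pairs---Hill lemma style filtrations as in \cite{AST,SS}---to reformulate membership in $\A$ as the vanishing of certain obstructions that live in an ideal $I$ of $\modL$. Concretely, $I$ should be built from morphisms $f : X \to Y$ in $\modL$ whose cokernels witness an Ext-incompatibility with $\B$. The hereditary hypothesis $\Ext^{i}_\Lambda(\A,\B) = 0$ for all $i \ge 2$ is precisely what makes $I$ closed under composition on both sides, i.e.\ idempotent.

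Second, I would show that $(\A,\B)$ is of finite type if and only if this idempotent ideal $I$ is generated by identity morphisms, i.e.\ $I = \langle 1_X : X \in \mathcal{C}_0 \rangle$ for some $\mathcal{C}_0 \subseteq \modL \cap \A$. The easy direction is that such a family $\mathcal{C}_0$ directly furnishes a set $\mathcal{S}$ of finitely generated modules with $\B = \Ker \Ext^1_\Lambda(\mathcal{S},-)$. The converse uses the reduction to countable type from \cite[Corollary 4.6]{AST} together with the assumption that $\B$ is closed under direct limits, which lets every obstruction to finite type be detected by an identity of a finitely presented object rather than merely by a non-split morphism.

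Third, I would invoke the paper's main structural dichotomy: in any Krull-Schmidt category with local d.c.c.\ on ideals, every idempotent ideal decomposes as the ideal generated by identity maps of some set of objects plus a part lying inside the transfinite radical. Applied to $I \subseteq \modL$, this reduces Conjecture (B) to asking whether $I$ can contain nontrivial content inside the transfinite radical. If it cannot, the argument closes and finite type follows; this in particular settles the conjecture whenever the transfinite radical of $\modL$ contains no non-zero idempotent ideals.

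The main obstacle, and the reason Conjecture (B) is not expected to be accessible in full generality, lies in this last step: nothing in the abstract Krull-Schmidt setup forces the portion of $I$ inside the transfinite radical to vanish, and indeed for wild or non-domestic algebras the transfinite radical can be large and harbor non-trivial idempotent ideals. The positive cases in the paper---domestic standard selfinjective algebras, domestic special biserial algebras, and algebras whose Krull-Gabriel dimension is an ordinal---will be exactly those where representation-theoretic hypotheses force the transfinite radical of $\modL$ to be nilpotent or at least to contain no non-zero idempotent ideals. Under any such hypothesis the plan goes through and yields Conjecture (B); without them the conjecture remains genuinely open, so the proof is necessarily conditional.
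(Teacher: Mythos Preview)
Your high-level plan is exactly the paper's: attach to $(\A,\B)$ an idempotent ideal $\mathfrak I$ of $\modL$, show finite type is equivalent to $\mathfrak I$ being generated by identities (Proposition~\ref{prop:tcmc_idemp}), and then apply the structural dichotomy (Corollary~\ref{cor:idemp}) to settle the conjecture whenever $\rad_\Lambda^* = 0$. You also correctly recognize that the argument is conditional and that Conjecture~(B) is not proved in general.

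There are, however, two genuine gaps in your execution. First, your description of the ideal is off: the paper takes $\mathfrak I$ to be the ideal of all morphisms in $\modL$ that \emph{factor through some module in $\A$} (possibly infinitely generated), not morphisms ``whose cokernels witness an Ext-incompatibility with $\B$.'' This concrete definition is what makes the equivalence with finite type go through cleanly. Second, and more seriously, you write that the hereditary hypothesis ``is precisely what makes $I$ closed under composition on both sides, i.e.\ idempotent.'' Closure under composition on both sides is what makes $I$ an \emph{ideal}; idempotency is the much stronger condition $\mathfrak I^2 = \mathfrak I$, and the hereditary hypothesis alone does not yield it. In the paper (Lemma~\ref{lem:tcmc_idemp}), idempotency is obtained by a genuinely different mechanism: given $f \in \mathfrak I$, one uses the countable-type reduction and the Hill lemma to factor $f$ through a countably generated $A' \in \A$, then writes $A'$ as the direct limit of a chain $C_1 \overset{f_1}\to C_2 \overset{f_2}\to \dots$ with all $f_i \in \mathfrak I$ (Proposition~\ref{prop:cnt_tcmc}(3)), and finally splits $f$ as a composition passing through two consecutive $f_i$'s. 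Without this argument your reduction to the idempotent-ideal theorem does not get off the ground.
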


As a tool to handle the conjectures, we will need the notion of an ideal of an additive category. Let $\C$ be a skeletally small additive category. A class $\mathfrak I$ of morphisms in $\mathcal C$ is called a (2-sided) \emph{ideal} of $\C$ if $\mathfrak I$ contains all zero morphisms, and it is closed under addition and under composition with arbitrary morphisms from left and right, whenever the operations are defined. Let us denote $\mathfrak I(X,Y) = \mathfrak I \cap \Hom_\C(X,Y)$. Note that if $\C = \modL$ then $\mathfrak I(X,Y)$ is always a $k$-submodule of $\Hom_\Lambda(X,Y)$ where $k$ is the centre of $\Lambda$. Since $\C$ was assumed to be skeletally small, ideals of $\C$ form a set.

We say that an additive category $\C$ is a \emph{Krull-Schmidt category} if it is skeletally small, every indecomposable object of $\C$ has a local endomorphism ring, and every object of $\C$ (uniquely) decomposes as a finite coproduct of indecomposables. As an example to keep in mind, we can put $\C = \modL$. For Krull-Schmidt categories there is a prominent ideal called the \emph{radical}---it is the ideal generated by all non-invertible morphisms between indecomposable objects. We denote this ideal by $\rad_\C$ and if $\C = \modL$ we use the abbreviated notation $\rad_\Lambda$. Let us recall the well known fact that $\rad_\C$ contains no identity morphisms and, clearly, it is the maximal ideal with this property. Here and also later in this paper we, of course, mean no identity morphisms of non-zero objects since zero morphisms are in any ideal by definition.

Following an idea in~\cite{P}, we can inductively define transfinite powers $\mathfrak I^\alpha$ for any ideal $\mathfrak I$ and any ordinal number $\alpha$. Let $\mathfrak I^0$ be the ideal of all morphisms in $\C$ and $\mathfrak I^1 = \mathfrak I$. For a natural number $n \ge 1$, we define $\mathfrak I^n$ as usual as the ideal generated by all compositions of $n$-tuples of morphisms from $\mathfrak I$. If $\alpha$ is a limit ordinal, we define $\mathfrak I^\alpha = \bigcap_{\beta<\alpha} \mathfrak I^\beta$. If $\alpha$ is infinite non-limit, then uniquely $\alpha = \beta+n$ for some limit ordinal $\beta$ and natural number $n \ge 1$, and we set $\mathfrak I^\alpha = (\mathfrak I^\beta)^{n+1}$. Note that since we assume that $\C$ is skeletally small, the decreasing chain
$$
\mathfrak I^0 \supseteq \mathfrak I^1 \supseteq \mathfrak I^2 \supseteq \dots
\supseteq \mathfrak I^\alpha \supseteq \mathfrak I^{\alpha+1} \supseteq \dots
$$
stabilizes for cardinality reasons. Let us denote $\mathfrak I^* = \bigcap_\alpha \mathfrak I^\alpha$, the minimum of the chain.

We will focus mostly on the case when $\mathfrak I = \rad_\C$. In this case we call $\rad_\C^*$ the \emph{transfinite radical} of $\C$. Notice that not necessarily $\rad_\C^* = 0$, even when $\C = \modL$ for an artin algebra $\Lambda$---see the next section or~\cite{P,S}. The main goal of this paper is to prove that TCMC formulated as Conjecture (B) holds true over those artin algebras for which $\rad_\Lambda^* = 0$. This applies in particular to:
\begin{itemize}
\item \cite{KeS} standard selfinjective algebras of domestic representation type;
\item \cite{S} special biserial algebras of domestic representation type.
\end{itemize}
Recall that a finite dimensional algebra over an algebraically closed field is of \emph{domestic representation type} if there is a natural number $N$ \st for each dimension $d$, all but finitely many indecomposable modules of dimension $d$ belong to at most $N$ one-parametric families.

\section{Transfinite radical}
\label{sec:rad_tr}

Let $\C$ be an additive category. We call an ideal $\mathfrak I$ of $\C$ \emph{idempotent} if $\mathfrak I = \mathfrak I^2$. Equivalently, $\mathfrak I$ is idempotent \iff for each $f \in \mathfrak I$ there are $g,h \in \mathfrak I$ \st $f=gh$. Using idempotency, we can give the following characterization of the transfinite radical:

\begin{lem} \label{lem:rad_tr_char}
Let $\C$ be a Krull-Schmidt category. Then $\rad_\C^*$ is the unique maximal idempotent ideal of $\C$ which does not contain any identity morphisms.
\end{lem}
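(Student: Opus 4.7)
The plan has three parts: show $\rad_\C^*$ is idempotent, show it contains no identity morphisms, and show it is maximal (hence unique) among idempotent ideals with this property.

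For idempotency, I would exploit the stabilization mentioned right before the lemma. The chain $\rad_\C^0 \supseteq \rad_\C^1 \supseteq \dots$ stabilizes at some ordinal $\gamma$, meaning $\rad_\C^\delta = \rad_\C^\gamma = \rad_\C^*$ for all $\delta \geq \gamma$. Now look past the stabilization point: at the limit ordinal $\gamma + \omega$ we get $\rad_\C^{\gamma+\omega} = \bigcap_{\beta<\gamma+\omega}\rad_\C^\beta = \rad_\C^*$, while the definition for $\gamma+\omega+1 = (\gamma+\omega) + 1$ gives $\rad_\C^{\gamma+\omega+1} = (\rad_\C^{\gamma+\omega})^2 = (\rad_\C^*)^2$. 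Since the chain has already stabilized, both sides equal $\rad_\C^*$, so $\rad_\C^* = (\rad_\C^*)^2$.

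The second part is immediate: $\rad_\C^* \subseteq \rad_\C^1 = \rad_\C$, and $\rad_\C$ is the maximal ideal containing no identity morphisms of nonzero objects, as recalled in the preliminaries.

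For maximality, I would take any idempotent ideal $\mathfrak I$ of $\C$ that contains no identity morphisms and prove by transfinite induction on $\alpha$ that $\mathfrak I \subseteq \rad_\C^\alpha$. Note first that idempotency of $\mathfrak I$ gives $\mathfrak I = \mathfrak I^n$ for every $n \geq 1$. The base step uses maximality of $\rad_\C$: since $\mathfrak I$ contains no identities, $\mathfrak I \subseteq \rad_\C = \rad_\C^1$. For the successor step at a finite ordinal $n+1$, $\mathfrak I = \mathfrak I^{n+1} \subseteq \rad_\C^{n+1}$ because $\mathfrak I \subseteq \rad_\C$. Limit ordinals are handled by taking intersections. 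For the remaining case $\alpha = \beta + n$ with $\beta$ a limit ordinal and $n \geq 1$, the inductive hypothesis gives $\mathfrak I \subseteq \rad_\C^\beta$, and then $\mathfrak I = \mathfrak I^{n+1} \subseteq (\rad_\C^\beta)^{n+1} = \rad_\C^\alpha$. Taking the intersection over all $\alpha$ gives $\mathfrak I \subseteq \rad_\C^*$. Uniqueness of a maximal such ideal is then formal.

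The only mildly subtle point is the idempotency argument, where one must resist the temptation to use just $\rad_\C^{\gamma+1}$: by the definition, the successor of a successor ordinal is built multiplicatively from the preceding \emph{limit} ordinal, not from its immediate predecessor, so one has to first cross a limit to turn the stabilization into a bona fide equation $\mathfrak J = \mathfrak J^2$. Once that observation is in place, the rest is straightforward transfinite bookkeeping.
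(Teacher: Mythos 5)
Your proof is correct and follows essentially the same route as the paper's: $\rad_\C^*$ contains no identities because it sits inside $\rad_\C$, and maximality comes from showing $\mathfrak I = \mathfrak I^\alpha \subseteq \rad_\C^\alpha$ for every ordinal $\alpha$ by transfinite induction. The only difference is that for idempotency the paper simply cites Prest's Proposition 0.6, whereas you supply a correct self-contained argument by passing a limit ordinal beyond the stabilization point so that the successor step literally reads $\mathfrak J = \mathfrak J^2$.
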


\begin{proof}
We use the same (just more verbose) proof as the one given for~\cite[8.10]{K3} for module categories.
Clearly, $\rad_\C^*$ contains no identity morphisms since neither $\rad_\C$ does. It is easy to check that $\rad_\C^*$ is idempotent \cite[Proposition 0.6]{P}. On the other hand, if $\mathfrak I$ is idempotent without identity maps, then $\mathfrak I = \mathfrak I^* \subseteq \rad_\C^*$ (since $\mathfrak I = \mathfrak I^\alpha$ for any ordinal $\alpha$ by idempotency). Hence $\rad_\C$ is maximal with respect to those two properties.
\end{proof}

There is also a useful characterization of the morphisms in $\rad_\C^*$ ``from inside'', sheding more light on the concept than a little cryptic definition as the intersection of a series of transfinite powers. The following statement has been proved in~\cite{P} for $\C=\modL$ using standard means similar to those when one deals with Krull dimension of a poset, and the proof reads equally well for any skeletally small Krull-Schmidt category:

\begin{lem} \label{lem:rad_tr_maps} \cite[Proposition 0.6]{P}
Let $\C$ be a Krull-Schmidt category and $f$ be a morphism in $\C$. Then $f \in \rad_\C^*$ \iff there exists a collection of morphisms $f_{pr}: X_r \to X_p$ in $\rad_\C$, one for each pair of rational numbers $p,r$ \st $0 \leq p < r \leq 1$, \st
\begin{enumerate}
\item $f_{ps} = f_{pr} f_{rs}$ whenever $p < r < s$;
\item $f_{01} = f$.
\end{enumerate}
\end{lem}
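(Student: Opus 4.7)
The plan is to prove the two implications separately. Idempotency of $\rad_\C^*$ (established in Lemma~\ref{lem:rad_tr_char}) is the essential algebraic input for the implication ($\Rightarrow$), while ($\Leftarrow$) is a transfinite induction on $\alpha$ using the density of the rationals.

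For the easier direction ($\Leftarrow$), assume the family $\{f_{pr}\}$ exists and prove by transfinite induction on $\alpha$ the uniform statement that $f_{pr} \in \rad_\C^\alpha$ for every rational pair $0 \le p < r \le 1$. The case $\alpha = 0$ is vacuous and $\alpha = 1$ is the hypothesis. At a limit $\alpha$, the intersection definition gives the result at once. At a successor step, it suffices to insert enough intermediate rationals via property~(1) and count factors: for a finite successor $\alpha = n+1$, write $f_{pr}$ as a composition of $n+1$ pieces from $\rad_\C$; for $\alpha = \gamma+n+1$ with $\gamma$ an infinite limit ordinal, write $f_{pr}$ as a composition of $n+2$ pieces, each lying in $\rad_\C^\gamma$ by the induction hypothesis, landing in $(\rad_\C^\gamma)^{n+2} = \rad_\C^\alpha$. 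Specialising to $p = 0$, $r = 1$ yields $f \in \rad_\C^\alpha$ for every $\alpha$, hence $f \in \rad_\C^*$.

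For ($\Rightarrow$), suppose $f \in \rad_\C^*$. Enumerate $\Q \cap [0,1]$ as a sequence $q_0 = 0$, $q_1 = 1$, $q_2, q_3, \dots$ and construct the objects $X_{q_n}$ and morphisms $f_{q_i q_j}$ recursively, maintaining the stronger invariant that every morphism constructed so far lies in $\rad_\C^*$ (not merely in $\rad_\C$). Start with $f_{01} = f$. At step $n \ge 2$, let $q_i, q_j$ be the two consecutive members of $\{q_0, \dots, q_{n-1}\}$ bracketing $q_n$. Using that $\rad_\C^*$ is idempotent, factor $f_{q_i q_j} = \alpha\beta$ with $\alpha,\beta \in \rad_\C^*$; declare $X_{q_n}$ to be the intermediate object and set $f_{q_i q_n} := \alpha$, $f_{q_n q_j} := \beta$. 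For every other previously enumerated $q_k$, define $f_{q_k q_n}$ by composition through $q_i$ (when $q_k < q_i$) and $f_{q_n q_k}$ by composition through $q_j$ (when $q_k > q_j$). Closure of $\rad_\C^*$ under composition preserves the invariant.

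The main obstacle is verifying the composition law $f_{ps} = f_{pr} f_{rs}$ across every triple in $\{q_0, \dots, q_n\}$ after each step. This reduces, by a short case analysis according to where each of $p, r, s$ lies relative to $q_n, q_i, q_j$, to the inductively-known cocycle identities on $\{q_0, \dots, q_{n-1}\}$ together with the single new equation $\alpha\beta = f_{q_i q_j}$. Once this bookkeeping is settled, the construction runs transparently through countably many stages and produces the required family.
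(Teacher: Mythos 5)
The paper gives no proof here; the lemma is quoted from Prest~\cite[Proposition~0.6]{P}, with the remark that Prest's argument for $\modL$ reads verbatim in any skeletally small Krull--Schmidt category, using ``standard means similar to those when one deals with Krull dimension of a poset.'' Your reconstruction is correct and is precisely that kind of argument: transfinite induction interpolating intermediate rationals for ($\Leftarrow$), and a recursive construction driven by idempotency of $\rad_\C^*$ for ($\Rightarrow$). Two small points worth recording. The idempotency you invoke from Lemma~\ref{lem:rad_tr_char} is not circular: once the chain $(\rad_\C^\alpha)$ stabilises at a limit ordinal $\alpha_0$ (which happens by skeletal smallness), one has $\rad_\C^*=\rad_\C^{\alpha_0}$ and $\rad_\C^{\alpha_0+1}=(\rad_\C^{\alpha_0})^2$, so $\rad_\C^*=(\rad_\C^*)^2$ directly, without appeal to the present lemma. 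Also, the factorisation $f_{q_iq_j}=\alpha\beta$ that idempotency yields is a priori only a finite sum $\sum_k g_k h_k$ of compositions through $\rad_\C^*$, but such a sum repackages as a single composition through a finite direct sum; since the objects $X_p$ are not required to be indecomposable, this is harmless. Finally, the cocycle bookkeeping you defer is genuinely routine once one records that each new $f_{q_kq_n}$ (resp.\ $f_{q_nq_k}$) is defined as the composite through the bracketing neighbour $q_i$ (resp.\ $q_j$), and one checks the four cases according to whether $q_n$ plays the role of the smallest, middle, or largest index.
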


Note that the collection $(f_{pr})_{0\leq p<r\leq 1}$ is nothing else than an inverse system indexed by $[0,1] \cap \Q$. Using the two lemmas above, we can give some examples of what the transfinite radical can be:

\begin{itemize}
\item If $\Lambda$ is an artin algebra of finite representation type, then $\rad_\Lambda$ is nilpotent. Hence $\rad_\Lambda^* = 0$.

\item If $\Lambda$ is a tame hereditary artin algebra, then $\rad_\Lambda^{\omega+2} = (\rad_\Lambda^\omega)^3 = 0$. Hence $\rad_\Lambda^* = 0$.

\item If $\Lambda$ is a standard (that is, having a simply connected Galois covering) selfinjective algebra of domestic representation type, then $\rad_\Lambda^\omega$ is nilpotent \cite{KeS}. Hence $\rad_\Lambda^* = 0$.

\item If $\Lambda$ is a special biserial algebra, then $\rad_\Lambda^* = 0$ \iff $\rad_\Lambda^{\omega^2} = 0$ \iff $\Lambda$ is of domestic representation type. If $\Lambda$ is not domestic, then there exists an indecomposable $\Lambda$-module $X$ \st $0 \ne \radL^*(X,X) \subseteq \End_\Lambda(X)$ (see~\cite[Theorem 2 and Prop. 6.2]{S}).

\item As special case of the previous point, one may consider ``Gelfand-Ponomarev'' algebras $\Lambda_{m,n} = k[x,y]/(xy,yx,x^m,y^n)$, see~\cite{GP}. The algebra $\Lambda_{2,3}$ is not of domestic represetation type and provides a very illustrative example of non-zero maps in the transfinite radical, see~\cite{P}.

\item If $\Lambda$ is a wild hereditary artin algebra, it is conjectured that $\rad_\Lambda^\omega$ is idempotent. In view of Lemma~\ref{lem:rad_tr_char}, this cojecture can be rephrased as $\rad_\Lambda^* = \rad_\Lambda^\omega$.

\item It is an unpublished result due to Dieter Vossieck that for the category $\C = \modl k\langle x,y\rangle$ of finite dimensional modules over the free algebra $k\langle x,y\rangle$, the radical $\rad_\C$ is idempotent. In particular $\rad_\C^* = \rad_\C$.
\end{itemize}

There is an important consequence of some of the examples above for wild artin algebras over an algebraically closed field. Namely, they \emph{always} have the transfinite radical non-zero. Let us state this precisely.


\begin{defn} \label{defn:wild}
Let $\Lambda$ and $\Gamma$ be finite dimensional algebras over a field $k$ and let $F: \modl\Gamma \to \modL$ be an additive functor. Then $F$ is called a \emph{representation embedding} if $F$ is faithful, exact, preserves indecomposability (i.e.\ if $X$ is indecomposable, so is $FX$) and reflects isomorphism classes (i.e.\ if $FX \cong FY$ then also $X \cong Y$).

A finite dimensional $k$-algebra is called \emph{wild} if for any other finite dimensional algebra $\Gamma$ over $k$, there is a representation embedding $\modl\Gamma \to \modL$.
\end{defn}

The following statement immediately follows from~\cite[Proposition 6.2]{S} and~\cite[Lemma 0.2]{P} (the same idea is also presented in~\cite[8.15]{K3}):

\begin{prop} \label{prop:wild}
Let $\Lambda$ be a wild algebra over an algebraically closed field. Then $\rad_\Lambda^* \ne 0$. Moreover, there exists an indecomposable $\Lambda$-module $X$ \st $0 \ne \radL^*(X,X) \subseteq \End_\Lambda(X)$.
\end{prop}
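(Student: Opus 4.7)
The plan is to transfer a nonzero transfinite-radical endomorphism from a carefully chosen non-domestic special biserial algebra $\Gamma$ over to $\Lambda$ via a representation embedding. First I would invoke wildness of $\Lambda$ to produce a representation embedding $F\colon \modl\Gamma \to \modL$, taking $\Gamma$ to be a non-domestic special biserial algebra, for instance the Gelfand--Ponomarev algebra $\Lambda_{2,3}$ appearing in the list above. Schr\"oer's result~\cite[Proposition~6.2]{S}, already cited in that list, then yields an indecomposable $Y \in \modl\Gamma$ and a nonzero $g \in \rad_\Gamma^*(Y,Y)$. I would set $X := FY$ and $\varphi := Fg$; then $X$ is indecomposable because $F$ preserves indecomposability, and $\varphi \ne 0$ by faithfulness of $F$. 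It remains to verify that $\varphi \in \rad_\Lambda^*(X,X)$.

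The heart of the proof will be an auxiliary inclusion $F(\rad_\Gamma)\subseteq \rad_\Lambda$. Since $F$ is faithful and additive it reflects zero objects (if $FZ = 0$ then $F(\mathrm{id}_Z) = 0$, whence $\mathrm{id}_Z = 0$ and $Z = 0$), and combined with exactness via $F(\Ker h) \cong \Ker(Fh)$ and $F(\Coker h) \cong \Coker(Fh)$ this implies that $F$ also reflects isomorphisms between objects of $\modl\Gamma$. Using further that $F$ preserves indecomposability, one concludes that $F$ sends non-isomorphisms between indecomposable objects of $\modl\Gamma$ to non-isomorphisms between indecomposable objects of $\modL$, i.e.\ into $\rad_\Lambda$. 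Since $\rad_\Gamma$ is the ideal generated by such morphisms, additivity and functoriality of $F$ then give $F(\rad_\Gamma) \subseteq \rad_\Lambda$.

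Finally I would invoke Lemma~\ref{lem:rad_tr_maps} on $g$ to obtain a collection $(g_{pr})$ of morphisms in $\rad_\Gamma$, indexed by rationals $0 \le p < r \le 1$, with $g_{ps} = g_{pr} g_{rs}$ whenever $p < r < s$ and $g_{01} = g$. Applying $F$ term by term produces a collection $(Fg_{pr})$ of morphisms in $\rad_\Lambda$ which, by functoriality, satisfies $Fg_{ps} = Fg_{pr} \cdot Fg_{rs}$ and $Fg_{01} = \varphi$. The converse implication of Lemma~\ref{lem:rad_tr_maps}, applied now inside $\modL$, then delivers $\varphi \in \rad_\Lambda^*(X,X)$, completing the argument. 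The main obstacle I anticipate is exactly the inclusion $F(\rad_\Gamma) \subseteq \rad_\Lambda$ in the middle paragraph: representation embeddings are not assumed to be full, so isomorphisms cannot simply be pulled back, and one must genuinely exploit the combination of faithfulness, exactness and preservation of indecomposability as described. This is essentially the observation recorded as~\cite[Lemma~0.2]{P}.
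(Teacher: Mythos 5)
Your proposal is correct and follows exactly the route the paper takes: the paper's proof is the single sentence citing Schr\"oer's \cite[Proposition~6.2]{S} together with Prest's \cite[Lemma~0.2]{P}, and what you have written out is precisely the content of those two citations and how they combine (as you yourself note at the end). The argument that a representation embedding sends $\rad_\Gamma$ into $\rad_\Lambda$---via reflection of isomorphisms deduced from faithfulness plus exactness, then the case split on whether $FU\cong FV$---is sound, and transporting the $[0,1]\cap\Q$-indexed factorizing system through $F$ using Lemma~\ref{lem:rad_tr_maps} correctly lands $\varphi=Fg$ in $\rad_\Lambda^*(X,X)$.
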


\section{Idempotent ideals in Krull-Schmidt categories}
\label{sec:idemp}

Let $\mathfrak I$ be an ideal of a Krull-Schmidt category. Then clearly, if $\mathfrak I$ is generated by a collection of identity morphisms, it is necessarily an idempotent ideal. In the sequel we will show that in ``nice'' categories, any idempotent ideal is generated by a collection of identity morphisms together with some morphisms from the transfinite radical. To make the word \emph{nice} precise, we need the following definition:

\begin{defn} \label{defn:dcc}
A skeletally small additive category $\C$ is said to have \emph{local descending chain condition on ideals} if for any decreasing series
$$ \mathfrak I_0 \supseteq \mathfrak I_1 \supseteq \mathfrak I_2 \supseteq \dots $$
of ideals of $\C$ and any pair of objects $X,Y$ in $\mathcal C$, the decreasing chain
$$ \mathfrak I_0(X,Y) \supseteq \mathfrak I_1(X,Y) \supseteq \mathfrak I_2(X,Y) \supseteq \dots $$
stabilizes.
\end{defn}

Now, our category is ``nice'' if it is Krull-Schmidt with local d.c.c.\ on ideals. In fact, this setting is very common in representation theory. Assume that $k$ is a commutative artinian ring and $\C$ is a skeletally small $k$-category ($\Hom$-spaces are $k$-modules and composition is $k$-linear) and satisfies the following conditions:
\begin{enumerate}
\item[(C1)] $\C$ has splitting idempotents (that is, idempotent morphisms have kernels in $\C$);
\item[(C2)] $\C$ is $\Hom$-finite (that is, $\Hom_\C(X,Y)$ is a finitely generated $k$-module for any objects $X,Y \in \C$).
\end{enumerate}
Then $\C$ is ``nice'':

\begin{lem} \label{lem:nice}
Let $k$ be a commutative artinian ring and $\C$ be a skeletally small $\Hom$-finite $k$-category with splitting idempotents. Then $\C$ is Krull-Schmidt with local d.c.c.\ on ideals.
\end{lem}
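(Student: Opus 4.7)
The plan is to deduce everything from one observation: every Hom space $\Hom_\C(X,Y)$ is an artinian $k$-module. Indeed, by assumption it is finitely generated over the commutative artinian ring $k$, and finitely generated modules over an artinian ring are artinian. I will use this to get both properties.

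First I would verify that any ideal $\mathfrak I$ of $\C$ is automatically $k$-linear in the sense that each $\mathfrak I(X,Y)$ is a $k$-submodule of $\Hom_\C(X,Y)$. This is immediate from the ideal axioms together with the $k$-linear structure: for $\lambda\in k$ and $f\in\mathfrak I(X,Y)$, one has $\lambda f=(\lambda\,\id_Y)\circ f$, which lies in $\mathfrak I$ by closure under composition. Once this is established, the local d.c.c.\ on ideals drops out at once: for any decreasing chain $\mathfrak I_0\supseteq\mathfrak I_1\supseteq\dots$ and any $X,Y$, the chain $\mathfrak I_0(X,Y)\supseteq\mathfrak I_1(X,Y)\supseteq\dots$ is a descending chain of $k$-submodules of the artinian $k$-module $\Hom_\C(X,Y)$, so it must stabilize.

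Next I would establish the Krull-Schmidt property. Applying the above observation to $\End_\C(X)$ gives a $k$-algebra which is finitely generated, hence artinian as a $k$-module; since its two-sided (or one-sided) ring-theoretic ideals are in particular $k$-submodules, $\End_\C(X)$ is an artinian ring. Existence of a decomposition of $X$ into finitely many indecomposables now follows by a standard idempotent argument: an artinian ring cannot contain an infinite orthogonal family of non-zero idempotents, so any decomposition of $\id_X$ into orthogonal idempotents in $\End_\C(X)$ is finite, and splitting idempotents (C1) turns such a decomposition into a coproduct decomposition of $X$ in $\C$. Iterating produces indecomposable summands, characterized by $\End_\C(X_i)$ having no non-trivial idempotents. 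Because each such $\End_\C(X_i)$ is artinian with only trivial idempotents, it is a local ring (a standard fact: an artinian ring in which $0$ and $1$ are the only idempotents has a nil maximal ideal consisting of all non-units). Uniqueness of the decomposition then follows from the classical Krull-Schmidt-Azumaya theorem applied in $\C$, since the endomorphism rings of the indecomposable summands are local.

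I do not foresee a genuine obstacle; the statement is essentially a packaging result. The only place where one has to be a little careful is the passage from ``artinian as a $k$-module'' to ``artinian as a ring'' for $\End_\C(X)$, and the verification that ideals of $\C$ are automatically $k$-linear---but both are routine consequences of the $k$-category assumption. Everything else is standard module theory together with the splitting-idempotents hypothesis (C1).
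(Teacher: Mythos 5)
Your proof is correct and follows essentially the same route as the paper: the paper simply cites the Krull--Schmidt property as a well-known consequence of (C1) and (C2), and obtains the local d.c.c.\ exactly as you do, from the observation that each $\mathfrak I(X,Y)$ is a $k$-submodule of the finitely generated (hence artinian) $k$-module $\Hom_\C(X,Y)$. Your spelled-out argument for the Krull--Schmidt part (artinian endomorphism rings, no infinite orthogonal families of idempotents, and locality of an artinian ring with only trivial idempotents) is the standard one and is sound.
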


\begin{proof}
It is a well known fact that $\C$ is Krull-Schmidt under the assumption. It is straightforward to show that $\mathfrak I(X,Y)$ is a $k$-submodule of $\Hom_\C(X,Y)$ for any ideal $\mathfrak I$ and any pair of objects $X,Y \in \C$. Hence $\C$ has clearly local d.c.c.\ on ideals thanks to (C2).
\end{proof}

\noindent
As a consequence, we can give plenty of examples of ``nice'' categories:
\begin{itemize}
\item $\modL$ for an artin algebra $\Lambda$;
\item $D^b(\Lambda)$, the derived bounded category for an artin algebra $\Lambda$;
\item The category of finite dimensional modules over any algebra over a field;
\end{itemize}
and many others.

Let us start with the proof of the aforementioned statement. First we need a technical lemma.

\begin{lem} \label{lem:limit}
Let $\C$ be a Krull-Schmidt category with local d.c.c.\ on ideals. Let $X,Y \in \C$ and $\alpha$ be a limit ordinal. Then there is $\beta < \alpha$ \st $\rad_\C^\beta(X,Y) = \rad_\C^\alpha(X,Y)$.
\end{lem}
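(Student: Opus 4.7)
The plan is to reduce the statement to a single application of the local d.c.c.\ hypothesis. Evaluating the transfinite powers at the pair $(X,Y)$ gives a decreasing chain of subgroups $\bigl(\rad_\C^\beta(X,Y)\bigr)_{\beta < \alpha}$ of $\Hom_\C(X,Y)$, and since $\alpha$ is a limit ordinal one has $\rad_\C^\alpha(X,Y) = \bigcap_{\beta<\alpha}\rad_\C^\beta(X,Y)$. So it suffices to produce some $\beta_0 < \alpha$ with $\rad_\C^\beta(X,Y) = \rad_\C^{\beta_0}(X,Y)$ for all $\beta_0 \leq \beta < \alpha$, and then take $\beta := \beta_0$.

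The key step is to upgrade Definition~\ref{defn:dcc}, which is stated only for $\omega$-chains of ideals, to the analogous statement for transfinite decreasing chains of ideals. I would argue by contradiction: if no such $\beta_0$ exists, then for every $\gamma<\alpha$ there is some $\gamma < \gamma' < \alpha$ with $\rad_\C^{\gamma'}(X,Y) \subsetneq \rad_\C^{\gamma}(X,Y)$. Starting from $\gamma_0 = 0$ and iterating this choice, I recursively build an $\omega$-sequence of ordinals $\gamma_0 < \gamma_1 < \gamma_2 < \dots < \alpha$ satisfying $\rad_\C^{\gamma_n}(X,Y) \supsetneq \rad_\C^{\gamma_{n+1}}(X,Y)$ at each step. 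Then $(\rad_\C^{\gamma_n})_{n < \omega}$ is a decreasing $\omega$-chain of ideals in $\C$ whose values at $(X,Y)$ never stabilize, contradicting the local d.c.c.

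I do not foresee a serious obstacle. The only mildly delicate point is to verify, directly from the three clauses of the inductive definition of $\rad_\C^\beta$ (naturals, limits, and successors-after-limits), that the transfinite chain of powers is genuinely decreasing; after this routine check the argument reduces to the standard trick of extracting a strictly descending $\omega$-subchain from a hypothetically non-stabilizing transfinite chain, which then collides with the local d.c.c.\ hypothesis.
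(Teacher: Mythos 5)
Your proposal is correct and follows essentially the same route as the paper: the paper's proof simply asserts that local d.c.c.\ makes the chain $(\rad_\C^\gamma(X,Y))_{\gamma<\alpha}$ stationary and then identifies the stable value with $\rad_\C^\alpha(X,Y)=\bigcap_{\gamma<\alpha}\rad_\C^\gamma(X,Y)$. The only difference is that you explicitly justify the step the paper leaves implicit, namely that the d.c.c.\ stated for $\omega$-indexed chains of ideals forces transfinite decreasing chains to stabilize pointwise, via the standard extraction of a strictly descending $\omega$-subchain.
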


\begin{proof}
Since $\C$ has local d.c.c.\ on ideals, the decreasing chain $(\rad_\C^\gamma(X,Y))_{\gamma<\alpha}$ is stationary. Therefore, there is $\beta<\alpha$ \st
$$ \rad_\C^\beta(X,Y) = \bigcap_{\gamma<\alpha} \rad_\C^\gamma(X,Y) = \rad_\C^\alpha(X,Y). $$
\end{proof}

Now, we are in a position to give the structure theorem for idempotent ideals:

\begin{thm} \label{thm:idemp}
Let $\C$ be a Krull-Schmidt category with local d.c.c.\ on ideals. Let $\mathfrak I$ be an idempotent ideal of $\C$ and $f \in \mathfrak I$. Then there are $f_1, f_2 \in \mathfrak I$ \st $f = f_1 + f_2$, the morphism $f_1$ is generated by identity morphisms from $\mathfrak I$, and $f_2 \in \rad_\C^*$.
\end{thm}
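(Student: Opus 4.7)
The plan is to introduce the sub-ideal $\mathfrak J \subseteq \mathfrak I$ generated by those identity morphisms $1_X$ which happen to lie in $\mathfrak I$, and to prove by transfinite induction on $\alpha \geq 1$ that
$$ \mathfrak I(X, Y) \subseteq \mathfrak J(X, Y) + \rad_\C^\alpha(X, Y) $$
for every pair of objects $X, Y$ of $\C$. Since the transfinite chain $(\rad_\C^\alpha)$ stabilizes at $\rad_\C^*$ for cardinality reasons, applying this bound at any sufficiently large $\alpha$ produces a decomposition $f = f_1 + f_2$ with $f_1 \in \mathfrak J \subseteq \mathfrak I$ and $f_2 \in \rad_\C^*$; the requirement $f_2 \in \mathfrak I$ comes for free from $f_2 = f - f_1$.

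The base $\alpha = 1$ is the only place where Krull-Schmidt is used in an essential way. Given $\phi \in \mathfrak I(X, Y)$, I would decompose $X = \bigoplus X_i$ and $Y = \bigoplus Y_j$ into indecomposables and expand $\phi$ as a matrix with entries $\phi_{ji} \in \mathfrak I(X_i, Y_j)$. For each pair $(j, i)$, either $\phi_{ji}$ is invertible, in which case $1_{X_i} = \phi_{ji}^{-1}\phi_{ji} \in \mathfrak I$ and hence $\phi_{ji} \in \mathfrak J$, or else the standard dichotomy for maps between indecomposable objects of a Krull-Schmidt category forces $\phi_{ji} \in \rad_\C$. Summing the two kinds of components separately yields the desired splitting.

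For the successor step I would invoke the sharp form of idempotency recalled at the start of Section~\ref{sec:rad_tr} to factor an arbitrary $\phi \in \mathfrak I(X, Y)$ as $\phi = gh$ with $g \in \mathfrak I(Z, Y)$ and $h \in \mathfrak I(X, Z)$ for some $Z \in \C$, and then apply the inductive hypothesis to both $g$ and $h$ to obtain $g = g_1 + g_2$ and $h = h_1 + h_2$ with $g_1, h_1 \in \mathfrak J$ and $g_2, h_2 \in \rad_\C^\alpha$. Expanding $gh = g_1 h_1 + g_1 h_2 + g_2 h_1 + g_2 h_2$, the first three summands absorb into $\mathfrak J$ because each contains a $\mathfrak J$-factor, while the fourth lies in $\rad_\C^\alpha \cdot \rad_\C^\alpha$. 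A short case analysis (finite $\alpha$; limit $\alpha$; $\alpha = \beta + n$ with $\beta$ limit and $n \geq 1$) against the transfinite-power definition from Section~\ref{sec:prelim} shows $\rad_\C^\alpha \cdot \rad_\C^\alpha \subseteq \rad_\C^{\alpha+1}$ for every $\alpha \geq 1$, which closes the step.

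The limit step is where the local d.c.c.\ pays off: for a limit ordinal $\alpha$ and a fixed pair $X, Y$, Lemma~\ref{lem:limit} supplies some $\beta < \alpha$ with $\rad_\C^\beta(X, Y) = \rad_\C^\alpha(X, Y)$, so the inductive hypothesis at $\beta$ transfers verbatim to $\alpha$. I expect the main obstacle to be the successor step: one has to apply the inductive hypothesis twice, verify that the three mixed summands really do land in $\mathfrak J$ and not merely in $\mathfrak I$, and confirm the inclusion $\rad_\C^\alpha \cdot \rad_\C^\alpha \subseteq \rad_\C^{\alpha+1}$ against the slightly non-standard indexing of transfinite powers fixed in Section~\ref{sec:prelim}.
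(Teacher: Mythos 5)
Your proof is correct and follows essentially the same transfinite induction as the paper: decompose $f$ at each ordinal $\alpha$ into an identity-generated part plus a part in $\rad_\C^\alpha$, using the idempotency factorization and the Krull--Schmidt dichotomy at the base/successor stages and Lemma~\ref{lem:limit} at limit stages. The only difference is organizational---you run a clean one-step successor $\alpha \to \alpha+1$ with a single factorization $\phi = gh$, whereas the paper jumps from a limit ordinal $\beta$ directly to $\beta+n$ by factoring $f = g^1\cdots g^{n+1}$ and applying the hypothesis at $\beta$ to each factor; both reduce to the same elementary inclusions between transfinite powers.
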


\begin{proof}
We will prove the following statement for all ordinal numbers $\alpha$ by induction:

\medskip
\begin{tabular}{cp{28em}}
$(*)$: &
For every $f \in \mathfrak I$ there are $f_{\alpha,1}, f_{\alpha,2} \in \mathfrak I$ \st $f = f_{\alpha,1} + f_{\alpha,2}$, the morphism $f_{\alpha,1}$ is generated by identity morphisms from $\mathfrak I$, and $f_{\alpha,2} \in \rad_\C^\alpha$.
\end{tabular}
\medskip

Then the theorem will follow if we take $\alpha$ sufficiently big. Let $f: X \to Y$ be a morphism from $\mathfrak I$---we can without loss of generality assume that $X$ and $Y$ are indecomposable.

For $\alpha = 0$, we can simply take $f_{0,1} = 0$ and $f_{0,2} = f$. If $\alpha$ is non-zero finite, we can construct by induction morphisms $g^1,g^2,\dots,g^\alpha \in \mathfrak I$ \st $f = g^1 g^2 \dots g^\alpha$. The morphisms $g^i$, $1 \leq i \leq \alpha$, are not necessarily morphisms between indecomposable objects of $\C$, but we can write $f$ as a finite sum of compositions of morphisms between indecomposables. That is:
$$ f = \sum_j g^{1j} g^{2j} \dots g^{\alpha j}, $$
where we take $g^{ij}$ as components of $g^i$, so that all $g^{ij}$ are in $\mathfrak I$. Finally, we can take $f_{\alpha,1}$ as the sum of those compositions $g^{1j} g^{2j} \dots g^{\alpha j}$ where at least one of the morphisms in the composition is invertible, and $f_{\alpha,2}$ the sum of the remaining compositions. Then clearly $f_{\alpha,1}$ is generated by identities from $\mathfrak I$ and $f_{\alpha,2} \in \rad_\C^\alpha$.

If $\alpha$ is a limit ordinal, there is an ordinal $\beta < \alpha$ \st $\rad_\C^\beta(X,Y) = \rad_\C^\alpha(X,Y)$ by Lemma~\ref{lem:limit}. Of course, $\beta$ depends on $X$ and $Y$. Hence we can set $f_{\alpha,1} = f_{\beta,1}$ and $f_{\alpha,2} = f_{\beta,2}$, where the existence of $f_{\beta,1}, f_{\beta,2}$ is given by inductive hypothesis.

Assume now that $\alpha$ is an infinite non-limit ordinal and $g_{\beta,1}, g_{\beta,2}$ have been already constructed for all $g \in \mathfrak I$ and $\beta<\alpha$. We can write $\alpha = \beta + n$ where $\beta$ is a limit ordinal and $n \ge 1$ is a natural number. Since $\mathfrak I$ is idempotent, we can as in the finite case construct $g^1,g^2,\dots,g^{n+1} \in \mathfrak I$ \st $f = g^1 g^2 \dots g^{n+1}$. By inductive hypothesis, we can for each $1 \leq i \leq n+1$ write $g^i = g^i_{\beta,1} + g^i_{\beta,2}$ where $g^i_{\beta,1}$ is generated by identity morphisms from $\mathfrak I$ and $g^i_{\beta,2} \in \mathfrak I \cap \rad_\C^\beta$. Now,
$$ f = \sum g^1_{\beta,k_1} g^2_{\beta,k_2} \dots g^{n+1}_{\beta,k_{n+1}} $$
where the sum is running through all tuples $(k_1, k_2, \dots, k_{n+1}) \in \{1,2\}^{n+1}$. Put $f_{\alpha,2} = g^1_{\beta,2} g^2_{\beta,2} \dots g^{n+1}_{\beta,2}$ and $f_{\alpha,1} = f - f_{\alpha,2}$. Then it immediately follows by the choice of $g^i_{\beta,1}$ and $g^i_{\beta,2}$ that $f_{\alpha,1}$ is generated by identity morphisms from $\mathfrak I$ and $f_{\alpha,2} \in (\rad_\C^\beta)^{n+1} = \rad_\C^\alpha$.
\end{proof}

Just by reformulating Theorem~\ref{thm:idemp}, we get the following corollary:

\begin{cor} \label{cor:idemp}
Let $\C$ be a Krull-Schmidt category with local d.c.c.\ on ideals. Let $\mathfrak I$ be an idempotent ideal of $\C$, $\mathfrak L$ be a representative set of identity maps contained in $\mathfrak I$, and let $\mathfrak R = \mathfrak I \cap \rad_\C^*$. Then $\mathfrak I$ is generated, as an ideal of $\C$, by $\mathfrak L \cup \mathfrak R$.
\end{cor}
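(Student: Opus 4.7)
The plan is to deduce the corollary directly from Theorem~\ref{thm:idemp}, since it is essentially a reformulation in terms of a generating set. Let $\mathfrak J$ denote the ideal of $\C$ generated by $\mathfrak L \cup \mathfrak R$. One inclusion is immediate: both $\mathfrak L$ and $\mathfrak R$ consist of morphisms that already lie in $\mathfrak I$ (the latter by definition of $\mathfrak R = \mathfrak I \cap \rad_\C^*$), so $\mathfrak J \subseteq \mathfrak I$ because $\mathfrak I$ is closed under addition and under composition on both sides.

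For the reverse inclusion $\mathfrak I \subseteq \mathfrak J$, I would pick an arbitrary $f \in \mathfrak I$ and apply Theorem~\ref{thm:idemp} to obtain a decomposition $f = f_1 + f_2$ with $f_1, f_2 \in \mathfrak I$, with $f_1$ generated by identity morphisms from $\mathfrak I$, and with $f_2 \in \rad_\C^*$. The first summand $f_1$ then belongs to the ideal generated by $\mathfrak L$ (see the remark below), hence $f_1 \in \mathfrak J$. For the second summand, observe that $f_2 = f - f_1$ is a difference of elements of $\mathfrak I$, so $f_2 \in \mathfrak I$; combining this with $f_2 \in \rad_\C^*$ gives $f_2 \in \mathfrak I \cap \rad_\C^* = \mathfrak R \subseteq \mathfrak J$. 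Hence $f = f_1 + f_2 \in \mathfrak J$, as required.

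The only point that deserves a word of care is passing from ``generated by identity morphisms from $\mathfrak I$'' (the language of Theorem~\ref{thm:idemp}) to ``generated by $\mathfrak L$'' (the language of the corollary). Here $\mathfrak L$ is a representative set, meaning one chosen $1_X$ per isomorphism class of indecomposable objects $X$ with $1_X \in \mathfrak I$; this is legitimate because if $X \cong X'$ via $\varphi \colon X \to X'$ and $1_X \in \mathfrak I$, then $1_{X'} = \varphi\, 1_X\, \varphi^{-1}$ lies in the ideal generated by $1_X$. Thus the ideals generated by ``all identities from $\mathfrak I$'' and by $\mathfrak L$ coincide, and no additional obstacle arises. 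The entire content of the corollary is therefore carried by Theorem~\ref{thm:idemp}; the deduction itself is routine bookkeeping.
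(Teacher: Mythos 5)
Your proof is correct and takes essentially the same route as the paper, which gives no separate argument but simply observes that the corollary is a reformulation of Theorem~\ref{thm:idemp}. You fill in the routine bookkeeping (both inclusions, plus the minor point that identities of isomorphic objects generate the same ideal, so a representative set suffices) accurately.
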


By combining the above statements, we can also characterize the situation when ideals are idempotent exactly when they are generated by a set of identity maps.

\begin{cor} \label{cor:idemp_gen}
Let $\C$ be a Krull-Schmidt category with local d.c.c.\ on ideals. Then the following are equivalent:
\begin{enumerate}
\item Every idempotent ideal of $\C$ is generated by a set of identity maps.
\item $\rad_\C^* = 0$.
\end{enumerate}
\end{cor}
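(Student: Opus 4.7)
The plan is to derive this directly from the previously established results, namely Lemma~\ref{lem:rad_tr_char} (which identifies $\rad_\C^*$ as the maximal idempotent ideal without identity maps) and Corollary~\ref{cor:idemp} (which decomposes any idempotent ideal into pieces generated by identities together with pieces inside $\rad_\C^*$). Both implications are short; there is no real obstacle, only bookkeeping.

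For the direction $(2) \Rightarrow (1)$, I would take an arbitrary idempotent ideal $\mathfrak I$ and apply Corollary~\ref{cor:idemp}: it is generated by $\mathfrak L \cup \mathfrak R$, where $\mathfrak L$ is a representative set of identity maps in $\mathfrak I$ and $\mathfrak R = \mathfrak I \cap \rad_\C^*$. If $\rad_\C^* = 0$, then $\mathfrak R = 0$, so $\mathfrak I$ is generated by $\mathfrak L$ alone, i.e.\ by a set of identity maps.

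For $(1) \Rightarrow (2)$, the point is simply that $\rad_\C^*$ is itself an idempotent ideal of $\C$ (Lemma~\ref{lem:rad_tr_char}). Under the assumption $(1)$, it is therefore generated by a set $\mathfrak L$ of identity maps. But again by Lemma~\ref{lem:rad_tr_char}, $\rad_\C^*$ contains no identity morphisms of non-zero objects. Hence $\mathfrak L$ consists only of identities of zero objects, which generate the zero ideal, and so $\rad_\C^* = 0$.

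The only subtlety worth pausing on is the phrase \emph{generated by a set of identity maps}: one must use that any identity in the generating set automatically lies in the ideal being generated, so that the generating identities are forced to sit inside $\rad_\C^*$ in the argument above, where none can exist. Once this is noted, both implications are immediate from the cited results, which is why I do not expect a substantial obstacle.
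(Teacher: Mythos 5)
Your proof is correct and follows essentially the same route as the paper: $(2)\Rightarrow(1)$ is exactly the application of Corollary~\ref{cor:idemp} with $\mathfrak R = 0$, and your direct argument for $(1)\Rightarrow(2)$ is just the contrapositive form of the paper's (which observes that a non-zero $\rad_\C^*$ would be a non-zero idempotent ideal without identity maps, violating $(1)$). No gaps.
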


\begin{proof}
$(1) \implies (2)$. If $\rad_\C^* \ne 0$, then by Lemma~\ref{lem:rad_tr_char} it is a non-zero idempotent ideal without identity maps, hence $(1)$ does not hold.

$(2) \implies (1)$. This is immediate by Corollary~\ref{cor:idemp} since, assuming $(2)$, we always get $\mathfrak R = 0$.
\end{proof}

\section{Telescope conjecture for module categories}
\label{sec:tcmc_mod}

The aim of this section is to prove TCMC for algebras with vanishing transfinite radicals. First, we need to collect some general results about TCMC from~\cite{SS}. Even though the results are often proved under weaker assumptions and work almost unchanged for left coherent rings, we specialize them to artin algebras since this is our main concern here.

\begin{prop} \label{prop:cnt_tcmc} \cite[Theorems 3.5, 4.8 and 4.9]{SS}
Let $\Lambda$ be an artin algebra, $(\A,\B)$ be a hereditary cotorsion pair in $\ModL$ \st $\B$ is closed under unions of well ordered chains, and $\mathfrak I$ be the ideal of all morphisms in $\modL$ which factor through some (infinitely generated) module from $\A$. Then:
\begin{enumerate}
\item $(\A,\B)$ is of countable type.
\item $\B = \Ker \Ext^1_\Lambda(\mathfrak I,-) = \{X \in \ModL \mid \Ext^1(f,X)=0 \;\; (\forall f \in \mathfrak I) \}$.
\item Every countably generated module in $\A$ is the direct limit of a countable chain
$$ C_1 \overset{f_1}\to C_2 \overset{f_2}\to C_3 \overset{f_3}\to \dots $$
of finitely generated modules \st $f_i \in \mathfrak I$ for each $i \ge 1$.
\end{enumerate}
\end{prop}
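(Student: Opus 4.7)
The plan is to prove (1) and (3) in parallel via a Kaplansky-style closure argument inside $\A$, and then derive (2) as a formal consequence. The single technical device driving everything is a countable closure lemma inside $\A$, which uses the hereditary cotorsion pair structure together with closure of $\B$ under well-ordered unions.

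\textbf{Key lemma.} For every $A\in\A$ and every countable subset $S\subseteq A$ there should be a countably generated submodule $N$ with $S\subseteq N\subseteq A$, $N\in\A$ and $A/N\in\A$. I would prove this by an $\omega$-step back-and-forth: starting from $\langle S\rangle$, repeatedly enlarge $N$ by countably many elements needed to witness $\Ext^1_\Lambda$-vanishing of $A/N$ against a generating set of $\B$ on one side, and to absorb syzygies for $N\in\A$ on the other side. Hereditary-ness keeps the syzygy step inside $\A$, and closure of $\B$ under well-ordered unions ensures that the countable limit of the process still satisfies the desired closure properties. I expect this step, a delicate Hill/Kaplansky-type bookkeeping, to be the main obstacle.

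\textbf{Part (3).} Given a countably generated $A\in\A$, I would exhaust $A$ by an ascending chain $M_1\subseteq M_2\subseteq\cdots$ of finitely generated submodules. Using the key lemma, together with special $\A$-precovers of the successive inclusions computed inside $A$ (whose middle terms lie in $\A$), one assembles a telescope $C_1\to C_2\to\cdots$ of finitely generated modules whose transition maps factor through modules of $\A$ and whose direct limit is $A$. By construction $f_n\in\mathfrak I$ for every $n$.

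\textbf{Part (1).} Iterating the key lemma transfinitely, every $A\in\A$ admits a continuous well-ordered filtration $0=A_0\subseteq A_1\subseteq\cdots\subseteq A_\kappa=A$ whose consecutive quotients $A_{\alpha+1}/A_\alpha$ are countably generated and belong to $\A$. Letting $\clS$ be a representative set of countably generated modules in $\A$, Eklof's lemma applied along the filtration gives $\clS^{\perp_1}\subseteq\A^{\perp_1}=\B$, the reverse inclusion being trivial. Hence $(\A,\B)$ is of countable type.

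\textbf{Part (2).} The inclusion $\B\subseteq\{X\mid\Ext^1_\Lambda(f,X)=0\;\forall f\in\mathfrak I\}$ is formal: if $f\in\mathfrak I$ factors as $K\to A\to L$ with $A\in\A$ and $X\in\B$, then $\Ext^1_\Lambda(f,X)$ factors through $\Ext^1_\Lambda(A,X)=0$. For the converse, by (1) it suffices to show $\Ext^1_\Lambda(C,X)=0$ for every countably generated $C\in\A$; by (3), write $C=\varinjlim C_n$ with transition maps $f_n\in\mathfrak I$, and apply the standard Milnor/Mittag--Leffler exact sequence relating $\Ext^1_\Lambda(\varinjlim C_n,X)$ to the inverse systems $(\Hom_\Lambda(C_n,X))$ and $(\Ext^1_\Lambda(C_n,X))$. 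The vanishing of every $\Ext^1_\Lambda(f_n,X)$ collapses both terms of this sequence, yielding $\Ext^1_\Lambda(C,X)=0$.
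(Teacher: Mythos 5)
First, note that the paper offers no proof of this proposition: it is quoted verbatim from \cite[Theorems 3.5, 4.8 and 4.9]{SS}, so your attempt has to be measured against the arguments there rather than against anything in the present text. Two steps of your sketch have genuine gaps. The first is the Key Lemma itself. To arrange $A/N\in\A={}^{\perp}\B$ by a back-and-forth you propose to ``witness $\Ext^1$-vanishing against a generating set of $\B$'', but no set cogenerating the cotorsion pair is available --- producing one is essentially equivalent to what is being proved --- and $\B$ is a priori a proper class. Worse, even for a single $B\in\B$ the condition $\Ext^1_\Lambda(A/N,B)=0$ is not a closure condition that can be secured by adjoining countably many elements to $N$: an extension of $A/N$ by $B$ is not controlled by countable data unless $A/N$ is already countably presented. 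In \cite{SS} the closure of $\B$ under unions of well-ordered chains enters not as a harmless ``pass to the limit of the process'' step but as the engine of a delicate argument ($\Ext$-vanishing for modules carrying an $\aleph_1$-dense system of suitable submodules, proved by constructing a long chain whose union would have to leave $\B$ if a non-split extension existed). Your sketch does not engage with this, and that is exactly where the difficulty of (1) lies.

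The second gap is in the converse inclusion of (2). In the Milnor sequence $0\to\varprojlim^1\Hom_\Lambda(C_n,X)\to\Ext^1_\Lambda(\varinjlim C_n,X)\to\varprojlim\Ext^1_\Lambda(C_n,X)\to 0$, the vanishing of the maps $\Ext^1_\Lambda(f_n,X)$ kills the right-hand term but says nothing about $\varprojlim^1\Hom_\Lambda(C_n,X)$; no Mittag--Leffler condition comes for free, since $X$ is arbitrary. Indeed the implication ``$\Ext^1(f_n,X)=0$ for all $n$ implies $\Ext^1(\varinjlim C_n,X)=0$'' is false in general: for the chain $\Z\xrightarrow{p}\Z\xrightarrow{p}\cdots$ over $\Z$ every map $\Ext^1(f_n,\Z)$ is trivially zero, yet $\Ext^1_{\Z}(\Z[1/p],\Z)\cong\varprojlim^1(\Z\xleftarrow{p}\Z\xleftarrow{p}\cdots)\neq 0$. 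Closing this requires using that $\Ext^1(f,X)=0$ for \emph{all} $f\in\mathfrak I$ --- an ideal containing far more than the transition maps of a single chain --- which is what \cite{SS} does. Finally, the assembly of the telescope in (3) from ``special $\A$-precovers of the successive inclusions'' is too vague to check: such precovers are not finitely generated, and it is not explained how finitely generated $C_n$ with $f_n\in\mathfrak I$ emerge from them.
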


We also need a technical lemma about filtrations which has been studied in~\cite{FL,SaT,ST}, and whose origins can be traced back to an ingenious idea of Paul Hill. Let us recall definitions.

\begin{defn} \label{defn:filtr}
Given a class of modules $\clS$, an \emph{$\clS$-filtration} of a module $M$ is a well-ordered chain $(M_\alpha \mid \alpha\leq\sigma)$ of submodules of $M$ \st $M_0=0$, $M_\sigma=M$, $M_\alpha = \bigcup_{\beta<\alpha} M_\beta$ for each limit ordinal $\alpha\leq\sigma$, and $M_{\alpha+1}/M_\alpha$ is isomorphic to a module from $\clS$ for each $\alpha<\sigma$. A module is called \emph{$\clS$-filtered} if it possesses (at least one) $\clS$-filtration.
\end{defn}

We will use the following specializations of a general statement from~\cite{ST} for finitely and for countably presented modules:

\begin{lem} \cite[Theorem 6]{ST}. \label{lem:hill}
Let $\mathcal S$ be a set of finitely (countably, resp.) presented modules over an arbitrary ring and $M$ be a module possessing an $\mathcal S$-filtration $(M_\alpha \mid \alpha\leq\sigma)$. Then there is a family $\mathcal F$ of submodules of $M$ such that:
\begin{enumerate}
\item $M_\alpha \in \mathcal F$ for all $\alpha\leq\sigma$.
\item $\mathcal F$ is closed under arbitrary sums and intersections.
\item For each $N,P \in \mathcal F$ \st $N \subseteq P$, the module $P/N$ is $\mathcal S$-filtered.
\item For each $N \in \mathcal F$ and a finite (countable, resp.) subset $X \subseteq M$, there is $P \in \mathcal F$ \st $N \cup X \subseteq P$ and $P/N$ is finitely (countably, resp.) presented.
\end{enumerate}
\end{lem}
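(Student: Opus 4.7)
The approach I would take is the classical Hill-lemma construction: encode submodules of $M$ by suitably ``closed'' subsets of the indexing ordinal $\sigma$, verify the four properties combinatorially at that level, and then transfer them back.

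First I would fix, for each $\alpha<\sigma$, a set $X_\alpha\subseteq M_{\alpha+1}$ whose image in $M_{\alpha+1}/M_\alpha$ realises a generating set of a chosen finite (resp.\ countable) presentation of that quotient. By induction on the filtration, $\bigcup_{\beta<\alpha}X_\beta$ generates $M_\alpha$, so the defining relations at level $\alpha$ lift to elements of $M_\alpha$ expressible as $R$-linear combinations of elements of $\bigcup_{\beta<\alpha}X_\beta$ involving only finitely many indices $\beta$. Collecting these indices across all relations at level $\alpha$ produces a support $C(\alpha)\subseteq\alpha$ of cardinality $<\omega$ in the finitely presented case and $\le\omega$ in the countably presented case. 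I would call $S\subseteq\sigma$ \emph{closed} if $C(\alpha)\subseteq S$ for every $\alpha\in S$, denote by $M(S)$ the submodule of $M$ generated by $\bigcup_{\alpha\in S}X_\alpha$, and set $\mathcal F:=\{\,M(S)\mid S\text{ closed}\,\}$.

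The key structural claim, proved by transfinite induction on $\sigma$, is that $M(S_1)+M(S_2)=M(S_1\cup S_2)$ and $M(S_1)\cap M(S_2)=M(S_1\cap S_2)$, together with the analogous identities for arbitrary families of closed sets. Granted this, property~(2) is immediate since closed subsets of $\sigma$ are trivially stable under arbitrary unions and intersections, and property~(1) follows because each initial segment $\{\beta:\beta<\alpha\}$ is closed with $M(\{\beta:\beta<\alpha\})=M_\alpha$. For~(3), if $N=M(S_N)\subseteq M(S_P)=P$, then enumerating $S_P\setminus S_N$ in its induced well-ordering yields an $\mathcal S$-filtration of $P/N$ whose factor at $\alpha\in S_P\setminus S_N$ is isomorphic to $M_{\alpha+1}/M_\alpha\in\mathcal S$, the intersection identity being precisely what guarantees the cleanness of these factors. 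For~(4), each element of the given finite (countable) set $X$ has finite support in $\bigcup_\alpha X_\alpha$, so iterating the closure operator $T\mapsto T\cup\bigcup_{\alpha\in T}C(\alpha)$ countably often produces a closed $T$ of cardinality $<\omega$ (resp.\ $\le\omega$) containing this support; taking $S_P:=S_N\cup T$ and invoking~(3) shows that $M(S_P)/N$ is finitely (countably) presented.

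The principal obstacle is the intersection identity $M(S_1)\cap M(S_2)=M(S_1\cap S_2)$, on which the whole scheme rests. Its proof demands a sufficiently careful initial choice of the $X_\alpha$ and of the lifts of the relations, so that every ``crossing'' between generators indexed by different parts of $\sigma$ is fully recorded by $C$; once this is arranged, the identity reduces by transfinite induction to the single-step comparison of $M(S)$ with $M(S\cup\{\alpha\})$ through the short exact sequence $0\to M_\alpha\to M_{\alpha+1}\to M_{\alpha+1}/M_\alpha\to 0$, the inductive hypothesis being applied to the strictly shorter indices in $C(\alpha)$.
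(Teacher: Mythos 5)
The paper does not prove this statement itself; it is cited verbatim as \cite[Theorem 6]{ST} (the generalized Hill lemma), so there is no internal proof to compare against. That said, your sketch reproduces precisely the strategy used in the cited source: lift generators of each factor $M_{\alpha+1}/M_\alpha$ to a set $X_\alpha\subseteq M_{\alpha+1}$, record for each $\alpha$ a well-founded support $C(\alpha)\subseteq\alpha$ carrying the lifted relations, define $\mathcal F=\{M(S): S\subseteq\sigma\text{ closed}\}$, and deduce all four properties from compatibility of $M(\cdot)$ with unions and intersections of closed sets. You also correctly single out the intersection identity $M(S_1)\cap M(S_2)=M(S_1\cap S_2)$ as the technical heart of the matter.

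One small gap in your argument for property (4): you assert that iterating $T\mapsto T\cup\bigcup_{\alpha\in T}C(\alpha)$ ``countably often produces a closed $T$ of cardinality $<\omega$'' in the finitely presented case, but iterating a finite-support operator $\omega$ times starting from a finite set does not by itself stay finite (it generically yields a countably infinite set). The finiteness here really comes from the well-foundedness of the supports: since $C(\alpha)\subseteq\alpha$, one shows by transfinite induction on $\alpha$ that the closure of the singleton $\{\alpha\}$ is finite, and the closure of a finite set, being a finite union of such singleton closures, is therefore finite. Without this observation, the claim that $P/N$ in (4) is finitely presented does not follow. The countably presented case is unaffected since the iteration plainly stays countable.

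Also worth recording: for property (3) the cleanness of the factors, i.e.\ that $M(T\cup\{\alpha\})/M(T)\cong M_{\alpha+1}/M_\alpha$ for $T$ closed with $T\cup\{\alpha\}$ closed, is exactly the point where the intersection identity is invoked, namely $\langle X_\alpha\rangle\cap M(T)=\langle X_\alpha\rangle\cap M_\alpha$. You allude to this, but since it is what makes the whole scheme function, it deserves explicit mention.
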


Most of what we need to do now before proving the main results is to observe that the ideal $\mathfrak I$ from Proposition~\ref{prop:cnt_tcmc} is always idempotent. We state this statement for artin algebras, but it again admits an almost verbatim generalization to left coherent rings.

\begin{lem} \label{lem:tcmc_idemp}
Let $\Lambda$, $(\A,\B)$ and $\mathfrak I$ be as in Proposition~\ref{prop:cnt_tcmc}. Then $\mathfrak I$ is an idempotent ideal of $\modL$.
\end{lem}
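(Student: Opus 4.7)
The plan is to take any $f: X \to Y$ in $\mathfrak I$, written as $f = vu$ with $u: X \to A$, $v: A \to Y$, $A \in \A$, and construct $g,h \in \mathfrak I$ with $f = gh$. The key idea is to first replace $A$ by a \emph{countably generated} member of $\A$ through which $f$ still factors, and then to use Proposition~\ref{prop:cnt_tcmc}(3) to chop up the factoring map at a finite stage.

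First, I would reduce to the countably generated case. By Proposition~\ref{prop:cnt_tcmc}(1), $(\A,\B)$ is of countable type, so there is a set $\clS$ of countably presented modules such that every module in $\A$ is $\clS$-filtered (up to passing to an $\clS$-filtered module of which $A$ is a summand). Fix such an $\clS$-filtration, and apply the countable version of Lemma~\ref{lem:hill} to the countable subset $u(X) \subseteq A$; this produces a submodule $P$ containing $u(X)$ that is itself $\clS$-filtered and countably presented. By Eklof's lemma $P \in \A$, and $u$ corestricts to $u': X \to P$ with $f = (v|_P) \circ u'$.

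Next, I apply Proposition~\ref{prop:cnt_tcmc}(3) to $P$, yielding a chain
\[
C_1 \xrightarrow{f_1} C_2 \xrightarrow{f_2} C_3 \xrightarrow{f_3} \cdots
\]
of finitely generated modules with $P = \varinjlim C_i$ and each $f_i \in \mathfrak I$. Since $\Lambda$ is artin, $X$ is finitely presented, so $\Hom_\Lambda(X,-)$ commutes with this direct limit and $u': X \to P$ factors through some structure map $\iota_i: C_i \to P$, say $u' = \iota_i \circ w$ for some $w: X \to C_i$. Using $\iota_i = \iota_{i+1} \circ f_i$, I obtain
\[
f = \bigl(v|_P \circ \iota_{i+1}\bigr) \circ \bigl(f_i \circ w\bigr).
\]
Setting $g = v|_P \circ \iota_{i+1}: C_{i+1} \to Y$ and $h = f_i \circ w: X \to C_{i+1}$, the map $g$ factors through $P \in \A$, while $h$ factors through whichever $\A$-module witnesses $f_i \in \mathfrak I$. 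So $g,h \in \mathfrak I$ and $f = gh$, showing $\mathfrak I = \mathfrak I^2$.

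The main obstacle I expect is the first step: extracting a countably generated $P \in \A$ that contains $u(X)$ and still lies inside $A$. This is exactly where ``countable type'' has to do real work, and the delicate point is matching Hill's lemma with the fact that the $\clS$-filtration in general exists only on a module of which $A$ is a summand, while simultaneously ensuring via Eklof's lemma that $P$ itself (not merely a larger module) belongs to $\A$. Once this countable approximation is in hand, the remaining two steps are essentially bookkeeping with the finite presentation of $X$.
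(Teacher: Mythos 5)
Your proposal is correct and follows essentially the same route as the paper's proof: reduce to a countably generated submodule $A'$ of $A$ lying in $\A$ and containing the image of $u$ via the countable version of Hill's lemma (your concern about $\clS$-filtrations is resolved by the Kaplansky-type statement cited there, which guarantees $A$ itself is filtered by countably generated modules from $\A$ under countable type), then express $A'$ as a direct limit of a chain with transition maps in $\mathfrak I$ via Proposition~\ref{prop:cnt_tcmc}(3), factor $u$ through some $C_i$ by finite generation of $X$, and regroup to exhibit $f$ as a composition of two maps in $\mathfrak I$. The only cosmetic difference is in the final regrouping: the paper puts both factors in $\mathfrak I$ by inserting $f_{i+1}f_i$ and using $f_i, f_{i+1} \in \mathfrak I$, whereas you split off $\iota_{i+1}f_i$ and observe that $v|_P\circ\iota_{i+1}$ factors through $P\in\A$; both are valid.
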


\begin{proof}
Let $f: X \to Y$ be a morphism from $\mathfrak I$. By definition, $f$ factors as $X \overset{g}\to A \overset{h}\to Z$ for some $A \in \A$. Since $(\A,\B)$ is of countable type, $A$ must be filtered by countably generated modules from $\A$ \cite[Theorem 10]{ST}. By Lemma~\ref{lem:hill}, we can find a countably generated submodule $A' \subseteq A$ \st $\Img g \subseteq A'$ and $A' \in \A$. More precisely, we use part (4) of the countable version of Lemma~\ref{lem:hill} for $N=0$ and $X$ a finite set of generators of $\Img g$. Hence, $f$ factors as $X \overset{g'}\to A' \overset{h'}\to Z$, and, by Proposition~\ref{prop:cnt_tcmc}, we can express $A'$ as the direct limit of a system
$$ C_1 \overset{f_1}\to C_2 \overset{f_2}\to C_3 \overset{f_3}\to \dots $$
of finitely generated modules \st $f_i \in \mathfrak I$ for each $i \ge 1$. Finally, since $X$ is finitely generated, $g'$ factors through $C_i$ for some $i \ge 1$. But then we can write $f = h'  v f_{i+1} f_i u$ for some morphisms $u$ and $v$, and clearly both $f_i u$ and $h'  v f_{i+1}$ are in $\mathfrak I$. Hence $f \in \mathfrak I^2$ and $\mathfrak I$ is idempotent.
\end{proof}

Now, we can equivalently rephrase Conjecture (B) in the language of ideals:

\begin{prop} \label{prop:tcmc_idemp}
Let $\Lambda$, $(\A,\B)$ and $\mathfrak I$ be as in Proposition~\ref{prop:cnt_tcmc}. Then the following are equivalent:
\begin{enumerate}
 \item $(\A,\B)$ is of finite type.
 \item $\mathfrak I$ is generated by a set of identity morphisms from $\modL$.
\end{enumerate}
\end{prop}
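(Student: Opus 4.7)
For the direction $(2) \Rightarrow (1)$, suppose $\mathfrak I$ is generated as an ideal by a family of identities $\{1_{C_i}\}_{i \in I}$ with $C_i \in \modL$. First I would observe that each $C_i \in \A$: since $1_{C_i} \in \mathfrak I$ factors as $C_i \to A \to C_i$ with $A \in \A$, the object $C_i$ is a direct summand of a module in $\A$, and $\A$ is closed under direct summands. Set $\clS = \{C_i\}_{i \in I}$, a set of finitely generated modules in $\A$. The inclusion $\B \subseteq \Ker \Ext^1_\Lambda(\clS, -)$ is immediate. For the reverse, I would take $B$ with $\Ext^1_\Lambda(C_i, B) = 0$ for every $i$; any $f \in \mathfrak I$ is a finite sum of compositions $u_j v_j$ with $v_j \colon \cdot \to C_{i_j}$ and $u_j \colon C_{i_j} \to \cdot$, so each $\Ext^1_\Lambda(u_j, B)$ factors through $\Ext^1_\Lambda(C_{i_j}, B) = 0$, and by additivity $\Ext^1_\Lambda(f, B) = 0$. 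Proposition~\ref{prop:cnt_tcmc}(2) then puts $B$ in $\B$, so $(\A, \B)$ is of finite type with generating set $\clS$.

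For the direction $(1) \Rightarrow (2)$, my candidate generating family of identities is the (representative) set $\clS'$ of all finitely generated modules in $\A$. Let $\mathfrak J \subseteq \mathfrak I$ denote the ideal of $\modL$ generated by $\{1_C : C \in \clS'\}$; the real work is showing $\mathfrak I \subseteq \mathfrak J$, i.e., that every $f \colon X \to Y$ in $\mathfrak I$ factors through some finitely generated module in $\A$. Writing $f = h g$ with $g \colon X \to A$, $h \colon A \to Y$, and $A \in \A$, I would invoke the Eklof--Trlifaj filtration theorem in the finite-type setting: $A$ is a direct summand of a module $\tilde A = A \oplus A''$ filtered by a fixed set $\clS_0 \subseteq \clS'$ of finitely generated generators of $(\A, \B)$. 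Applying the finitely presented version of Hill's lemma (Lemma~\ref{lem:hill}) to this $\clS_0$-filtration of $\tilde A$, together with the finite generating set of $(\Img g) \oplus 0 \subseteq \tilde A$, produces a finitely generated submodule $\tilde P \subseteq \tilde A$ that contains $(\Img g) \oplus 0$ and admits a finite $\clS_0$-filtration. Closure of $\A$ under extensions forces $\tilde P \in \A \cap \modL = \clS'$, and $g$ factors as $X \xrightarrow{\tilde g} \tilde P \hookrightarrow \tilde A \xrightarrow{\pi_A} A$, so $f$ factors through $\tilde P$ and thus lies in $\mathfrak J$.

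The main obstacle is securing the correct filtration statement for $A \in \A$ under the finite-type assumption: namely that, after possibly absorbing a complementary summand, $A$ is a transfinite extension of finitely generated modules from $\A$. This is a standard consequence of the description of the left class of a cotorsion pair generated by a set of finitely generated modules, and is already implicit in the lemmas cited in this section. Once this is in hand, the inclusion $\mathfrak I \subseteq \mathfrak J$ becomes a direct Hill's lemma argument coupled with a short diagram chase through $\pi_A$. Note also that idempotency of $\mathfrak I$ (Lemma~\ref{lem:tcmc_idemp}) is not used directly in this proof; it underlies the structural picture of Section~\ref{sec:idemp} in which the proposition naturally sits.
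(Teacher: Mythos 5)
Your proposal is correct and follows essentially the same route as the paper: for $(1)\Rightarrow(2)$, both use the description of $\A$ as direct summands of $\clS$-filtered modules together with the finite version of Hill's lemma (Lemma~\ref{lem:hill}(4)) applied with $N=0$ and a finite generating set of $\Img g$ to produce a finitely generated module of $\A$ through which $f$ factors; for $(2)\Rightarrow(1)$, both read off $\B = \Ker\Ext^1_\Lambda(\clS,-)$ from Proposition~\ref{prop:cnt_tcmc}(2). The only cosmetic difference is that you keep the complementary direct summand $A''$ explicit before applying Hill's lemma, whereas the paper passes immediately to an $\clS$-filtered module containing $\Img g$; the underlying argument is the same.
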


\begin{proof}
$(1) \implies (2)$. Assume that $(\A,\B)$ is of finite type, that is, $\B = \Ker\Ext^1_\Lambda(\clS,-)$ for some set $\clS$ of finitely generated modules. We can without loss of generality assume that $\clS$ is a representative set of all finitely generated modules in $\A$.

We claim that $\mathfrak I$ is then generated by the set $\{ 1_X \mid X \in \clS \}$. To this end we recall that under our assumption, $\A$ consists precisely of direct summands of $\clS$-filtered modules (see~\cite[Theorem 2.2]{T} or~\cite[Corollary 3.2.3]{GT}). Hence, if $f: X \to Y$ is a morphism from $\mathfrak I$, then it factors as $X \overset{g}\to A \overset{h}\to Z$ for some $\clS$-filtered module $A$. Using part (4) of the finite version of Lemma~\ref{lem:hill} for $N=0$ and a finite set $X$ of generators of $\Img g$, we can find a module $A' \subseteq A$ \st $A'$ is isomorphic to some module in $X \in \clS$ and $\Img g \subseteq A'$. Thus, $f$ factors through $1_X$ and since $f$ was chosen arbitrarily, the claim is proved.

$(2) \implies (1)$. Suppose that $\clS$ is a set of finitely generated modules \st $\{1_X \mid X \in \clS\}$ generates $\mathfrak I$. It is straightforward by Proposition~\ref{prop:cnt_tcmc}~(2) that $\B = \bigcap_{X \in \clS} \Ker \Ext^1_\Lambda(1_X,-)$. But this is exactly the same as saying that $\B = \Ker \Ext^1_\Lambda(\clS,-)$. Hence, the cotorsion pair $(\A,\B)$ is of finite type.

\end{proof}

Finally, we can prove TCMC formulated as Conjecture (B) for those artin algebras $\Lambda$ for which $\rad_\Lambda^*=0$. Note that all what we need to do in view of Lemma~\ref{lem:tcmc_idemp} and Proposition~\ref{prop:tcmc_idemp} is to show that certain idempotent ideals are generated by identities, and this is always the case when $\rad_\Lambda^*=0$. As mentioned above, $\rad_\Lambda^*=0$ whenever $\Lambda$ is a domestic standard selfinjective algebra~\cite{KeS} or a domestic special biserial algebra~\cite{S} over an algebraically closed field.

\begin{thm} \label{thm:tcmc_rad0}
Let $\Lambda$ be an artin algebra \st $\rad_\Lambda^* = 0$. Then every hereditary cotorsion pair $(\A,\B)$ in $\ModL$ \st $\B$ is closed under unions of well ordered chains is of finite type.
\end{thm}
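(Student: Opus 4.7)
The plan is to string together the results that have just been assembled, applying the structural theorem for idempotent ideals to the specific idempotent ideal associated to the cotorsion pair.

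First, I would let $(\A,\B)$ be a hereditary cotorsion pair with $\B$ closed under unions of well-ordered chains, and introduce the ideal $\mathfrak I$ of morphisms in $\modL$ that factor through some (possibly infinitely generated) module of $\A$, as in Proposition~\ref{prop:cnt_tcmc}. By Lemma~\ref{lem:tcmc_idemp}, this ideal $\mathfrak I$ is idempotent.

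Next, I would observe that $\modL$ fits into the framework of Section~\ref{sec:idemp}: it is a $\Hom$-finite $k$-category over the centre $k$ of $\Lambda$ (an artinian commutative ring) with splitting idempotents, so by Lemma~\ref{lem:nice} it is a Krull-Schmidt category with local d.c.c.\ on ideals. The hypothesis $\rad_\Lambda^* = 0$ therefore lets me invoke Corollary~\ref{cor:idemp_gen}: every idempotent ideal of $\modL$ is generated by a set of identity morphisms. In particular $\mathfrak I$ is generated by a set of the form $\{1_X \mid X \in \clS\}$ for some set $\clS$ of finitely generated modules.

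Finally, I would apply the equivalence in Proposition~\ref{prop:tcmc_idemp}: since $\mathfrak I$ is generated by a set of identity morphisms from $\modL$, the cotorsion pair $(\A,\B)$ is of finite type, completing the proof. The argument is essentially a bookkeeping assembly of previously established facts; there is no genuine obstacle, as all the technical content, both the idempotency of $\mathfrak I$ and the structural analysis of idempotent ideals in terms of $\rad_\C^*$, has already been handled in the preceding lemmas and corollaries.
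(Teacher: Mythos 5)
Your proof is correct and follows exactly the same route as the paper's: form the ideal $\mathfrak I$ of morphisms in $\modL$ factoring through $\A$, apply Lemma~\ref{lem:tcmc_idemp} to see it is idempotent, Corollary~\ref{cor:idemp_gen} to see it is generated by identities when $\rad_\Lambda^*=0$, and Proposition~\ref{prop:tcmc_idemp} to conclude finite type. The only addition you make is an explicit check (via Lemma~\ref{lem:nice}) that $\modL$ is Krull-Schmidt with local d.c.c.\ on ideals, which the paper leaves implicit since it was already noted in Section~\ref{sec:idemp}.
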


\begin{proof}
Let $\mathfrak I$ be the ideal of all morphisms in $\modL$ which factor through some module from $\A$. Then $\mathfrak I$ is an idempotent ideal by Lemma~\ref{lem:tcmc_idemp} and, therefore, generated by a set of identity maps by Corollary~\ref{cor:idemp_gen}. The latter is equivalent to saying that $(\A,\B)$ is of finite type by Proposition~\ref{prop:tcmc_idemp}.
\end{proof}

Another condition on an artin algebra $\Lambda$ which seems to be closely related to vanishing of the transfinite radical and the domestic representation type is that of the Krull-Gabriel dimension of $\Lambda$ being an ordinal number. Let us recall first that the category $\C(\Lambda) = \modmodL$ of finitely presented covariant additive functors $\modL \to \Ab$ is an abelian category, and we can inductively define a filtration
$$
\clS_0 \subseteq \clS_1 \subseteq \clS_2 \subseteq \dots
\subseteq \clS_\alpha \subseteq \clS_{\alpha+1} \subseteq \dots
$$
of Serre subcategories of $\C(\Lambda)$ as follows: Let $\clS_0$ be the full subcateory of $\C(\Lambda)$ formed by functors of finite length, and for each ordinal number $\alpha$, let $\clS_{\alpha+1}$ be the full subcategory of all functors whose image under the localization functor $\C(\Lambda) \to \C(\Lambda)/\clS_\alpha$ is of finite length. At limit ordinals $\alpha$, we take just the unions $\clS_\beta = \bigcup_{\beta<\alpha} \clS_\alpha$. We refer to~\cite[\S7]{K3} for more details and further references. The construction leads to the following definition:

\begin{defn} \label{defn:kgdim}
The \emph{Krull-Gabriel dimension} of an artin algebra $\Lambda$ is defined as $\KGdim \Lambda = \alpha$ where $\alpha$ is the least ordinal number \st $\clS_\alpha = \C(\Lambda)$. If no such $\alpha$ exists, one puts $\KGdim \Lambda = \infty$.
\end{defn}

As a consequence of a deeper and more refined theorem, \cite[Corollary 8.14]{K3} shows that $\radL^* = 0$ whenever $\KGdim\Lambda < \infty$. In particular, we get as a corollary of Theorem~\ref{thm:tcmc_rad0} that TCMC holds for any artin algebra with ordinal Krull-Gabriel dimension:

\begin{cor} \label{cor:tcmc_kgdim}
Let $\Lambda$ be an artin algebra \st $\KGdim \Lambda < \infty$. Then every hereditary cotorsion pair $(\A,\B)$ in $\ModL$ \st $\B$ is closed under unions of well ordered chains is of finite type.
\end{cor}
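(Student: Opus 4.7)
The plan is to present this result as a direct concatenation of two prior results, since all the substantive work has already been done. The corollary follows from chaining together an external input from Krause's paper and Theorem~\ref{thm:tcmc_rad0}. There is essentially no new argument to construct.

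First, I would invoke the deep result from~\cite[Corollary 8.14]{K3} that was explicitly cited in the paragraph preceding the statement: if $\KGdim\Lambda < \infty$, then $\rad_\Lambda^* = 0$. This is the only nontrivial ingredient, and since the author defers to~\cite{K3} for its proof, I would do the same rather than attempt to reconstruct it. Conceptually, the reason is that an ordinal Krull-Gabriel dimension forces a well-founded transfinite stratification of $\C(\Lambda) = \modmodL$ by Serre subcategories $\clS_\alpha$, and a nonzero morphism in $\rad_\Lambda^*$ would contradict well-foundedness by producing a situation incompatible with any $\clS_\alpha$ exhausting $\C(\Lambda)$; but this is precisely what Krause establishes and I would not reprove it here.

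Second, with $\rad_\Lambda^* = 0$ in hand, I would apply Theorem~\ref{thm:tcmc_rad0} verbatim to the given hereditary cotorsion pair $(\A,\B)$ (whose class $\B$ is closed under unions of well ordered chains by hypothesis). The conclusion is exactly that $(\A,\B)$ is of finite type. The internal mechanism inside Theorem~\ref{thm:tcmc_rad0} is the chain: Lemma~\ref{lem:tcmc_idemp} gives that the ideal $\mathfrak I$ of morphisms factoring through $\A$ is idempotent in $\modL$; Corollary~\ref{cor:idemp_gen} (applied with $\C = \modL$, which is Krull-Schmidt with local d.c.c.\ on ideals by Lemma~\ref{lem:nice}) then shows $\mathfrak I$ is generated by identity morphisms; and Proposition~\ref{prop:tcmc_idemp} translates this back to finite type. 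But all of this is already packaged in Theorem~\ref{thm:tcmc_rad0}, so no further work is needed.

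There is no main obstacle in this proof; both nontrivial inputs are assumed. If I wanted to fatten the argument slightly, I could remark that the hypothesis $\KGdim\Lambda<\infty$ is strictly stronger than what Theorem~\ref{thm:tcmc_rad0} requires, so the corollary is genuinely a specialization. The total length should be one or two sentences of actual proof, mirroring the author's own framing that this is an immediate consequence.
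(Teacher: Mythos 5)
Your proof is correct and matches the paper's argument exactly: the author also obtains $\rad_\Lambda^* = 0$ from $\KGdim\Lambda < \infty$ via \cite[Corollary 8.14]{K3} and then applies Theorem~\ref{thm:tcmc_rad0}. Nothing more is needed.
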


\begin{rem}
The concept of the Krull-Gabriel dimension has been nicely illustrated by Geigle for tame hereditary algebras $\Lambda$ in~\cite{Gei}, where he explicitly computed that $\KGdim \Lambda = 2$ and described the localization categories $\clS_1/\clS_0$ and $\clS_2/\clS_1$.

The proof of the fact that $\KGdim \Lambda < \infty$ implies $\radL^* = 0$ in~\cite{K3} goes through a stronger statement and involves many technical arguments. There is, however, a more elementary way to see this. Namely, one can define a so called m-dimension of a modular lattice following~\cite[\S 10.2]{P2}. Then $\KGdim \Lambda$ is equal to the m-dimension of the lattice of subobjects in $\modmodL$ of the forgetful functor $\Hom_\Lambda(\Lambda,-)$, \cite[7.2]{K3}. Such subobjects precisely correspond to pairs $(M,m)$ where $M \in \modL$ and $m \in M$, and $(M',m')$ corresponds to a subobject of $(M,m)$ \iff there is a homomorphism $f: M \to M'$ in $\modL$ \st $f(m) = m'$, \cite[7.1]{K3}. Now, $\KGdim \Lambda = \infty$ \iff there is a factorizable system in $\modL$ in the sense of~\cite{P}. Existence of such a factorizable system is easily implied by Lemma~\ref{lem:rad_tr_maps} or \cite[Proposition 0.6]{P} if $\radL^* \ne 0$.

The Krull-Gabriel dimension of $\Lambda$ gives also a strong link to model theory of modules, as it is equal to the m-dimension of the lattice of primitive positive formulas in the first order theory of $\Lambda$-modules. We refer to~\cite[Proposition 0.3]{P} and~\cite[\S 12]{P2} for more details.
\end{rem}

\section{Telescope conjecture for triangulated categories}
\label{sec:tcmc_triang}

We also shortly recall the application on the telescope conjecture for triangulated categories. If $\Lambda$ is a selfinjective artin algebra, then the stable module category $\stModL$ modulo injective modules is \emph{triangulated} in the sense of~\cite[IV]{GM} or \cite[I]{H}. The triangles are, up to isomorphism, of the form
$$ X \overset{f}\to Y \overset{g}\to Z \overset{h}\to \Sigma X $$
where $0 \to X \overset{f}\to Y \overset{g}\to Z \to 0$ is a short exact sequence in $\ModL$, and the suspension functor $\Sigma: \stModL \to \stModL$ corresponds to taking cosyzygies in $\ModL$. Clearly, $\Sigma$ is an auto-equivalence of $\stModL$ and the corresponding inverse $\Sigma^{-1}$ is given by taking syzygies in $\ModL$.

An object $X$ in a triangulated category with (set-indexed) coproducts is called \emph{compact} if the representable functor $\Hom(X,-)$ commutes with coproducts. In particular, an object $X \in \stModL$ is compact \iff it is isomorphic to a finitely generated $\Lambda$-module in $\stModL$ (see~\cite[\S 1.5]{K1} or~\cite[\S 6.5]{K4}).

A full triangulated subcategory $\X$ of $\stModL$ is called \emph{localizing} if it is closed under forming coproducts in $\stModL$. A localizing subcategory $\X$ is called \emph{smashing} if the inclusion $\X \hookrightarrow \stModL$ has a right adjoint which preserves coproducts. We say that a localizing subcategory $\X$ is \emph{generated} by a class $\C$ of objects if there is no proper localizing subclass of $\X'$ of $\X$ \st $\C \subseteq \X'$. We refer to~\cite{K1,K2} for a thorough discussion of these concepts. It follows that $\stModL$ is a \emph{compactly generated} triangulated category, that is, $\stModL$ is generated, as a localizing class, by a set of compact objects.

The telescope conjecture studied in~\cite{K1,K2} asserts that every smashing localizing subcategory of a compactly generated triangulated category is generated by a set of compact objects. Even though it is generally false as mentioned in the introduction, we can give an affirmative answer in a special case. Namely Theorem~\ref{thm:tcmc_rad0} together with results from~\cite{KrS} imply that the conjecture holds for $\stModL$ where $\Lambda$ is a selfinjective artin algebra with vanishing transfinite radical.

\begin{thm} \label{thm:tcmc2_rad0}
Let $\Lambda$ be a selfinjective artin algebra \st $\rad_\Lambda^* = 0$. Let $\X$ be a smashing localizing subcategory of $\stModL$. Then $\X$ is generated by a set of finitely generated $\Lambda$-modules.
\end{thm}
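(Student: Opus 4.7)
The plan is to transport the problem from the triangulated setting $\stModL$ to the module category $\ModL$ via the Krause--Solberg correspondence of \cite{KrS}, apply Theorem~\ref{thm:tcmc_rad0}, and then transport the resulting finite type statement back to $\stModL$.

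First I would invoke the main result of~\cite{KrS} which, for a selfinjective artin algebra, associates to every smashing localizing subcategory $\X \subseteq \stModL$ a hereditary cotorsion pair $(\A,\B)$ in $\ModL$ in such a way that $\B$ is closed under direct limits (equivalently, under unions of well-ordered chains, cf.~\cite[Theorem 1.5]{AR}), and $\X$ is recovered as a subcategory of $\stModL$ determined by $\A$ (typically, the image in $\stModL$ of the $\Sigma$-orbit of $\A$, together with injectives). The key properties of this correspondence are that (i) the cotorsion pair is of finite type if and only if $\X$ is compactly generated in $\stModL$, and (ii) the finitely generated modules in $\A$ give rise to compact objects of $\X$ in $\stModL$ by the description of compacts recalled in Section~\ref{sec:tcmc_triang}.

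Next I would apply Theorem~\ref{thm:tcmc_rad0}: since $\rad_\Lambda^* = 0$ by hypothesis, the hereditary cotorsion pair $(\A,\B)$ produced above is automatically of finite type. Hence there is a set $\clS$ of finitely generated $\Lambda$-modules such that $\B = \Ker\Ext^1_\Lambda(\clS,-)$, and $\A$ is the smallest class of modules closed under the natural operations of the cotorsion pair containing $\clS$.

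Finally I would translate this back to the triangulated side: via the Krause--Solberg correspondence, the set $\clS$ of finitely generated generators of $(\A,\B)$ yields a set of compact objects in $\stModL$ which generate $\X$ as a localizing subcategory. The main step to verify carefully is this last translation, namely that ``$(\A,\B)$ is of finite type'' in the module-theoretic sense really corresponds to ``$\X$ is generated (as a localizing subcategory) by a set of compact objects'' in the sense of Section~\ref{sec:tcmc_triang}; this is precisely the content of the conjecture equivalence in~\cite[\S7]{KrS}, and it is the only non-formal ingredient in the argument. The bulk of the work has already been carried out in Theorem~\ref{thm:tcmc_rad0}, so the present proof reduces to citing \cite{KrS} at both ends and applying that theorem in the middle.
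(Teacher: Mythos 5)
Your proposal is correct and takes essentially the same route as the paper: both transport the question to a hereditary cotorsion pair $(\A,\B)$ in $\ModL$ with $\B$ closed under direct limits via the Krause--Solberg correspondence of~\cite[\S 7]{KrS}, invoke Theorem~\ref{thm:tcmc_rad0} to conclude finite type, and transport the conclusion back to $\stModL$. The paper phrases the last step at the level of the conjectures (Conjecture~(B) implies Conjecture~(A), then~\cite[Corollary 7.7]{KrS}), while you phrase it as a per-object correspondence, but this is the same argument.
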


\begin{proof}
We know that Conjecture (B) (see page~\pageref{conj:B}) holds for $\Lambda$ by Theorem~\ref{thm:tcmc_rad0}. Hence also Conjecture (A) holds by the discussion in Section~\ref{sec:prelim}. The rest follows immediately from~\cite[Corollary 7.7]{KrS}.
\end{proof}

\section{Examples}
\label{sec:expl}

We conclude with some examples of particular representation-infinite selfinjective algebras with vanishing transfinite radical.

\begin{expl}
The simplest example is probably the exterior algebra of a $2$-di\-men\-si\-o\-nal vector space over an algebraically closed field. That is, $\Lambda_2 = k\spanned{x,y}/(x^2,y^2,xy+yx)$. It is a special biserial algebra in the sense of~\cite{SW} and it has, up to rotation equivalence and inverse, only one band $xy^{-1}$. In particular, $\Lambda_2$ is domestic and we have exactly one one-parametric family of indecomposable modules in each even dimension. For example, we have $M_{(a:b)} = \Lambda_2/\Lambda_2 (ax + by)$ for each $(a\!\!:\!\!b) \in \mathbb{P}^1(k)$ in dimension $2$. Thus, $\rad_{\Lambda_2}^* = 0$ by~\cite[Theorem 2]{S}.

With a little more effort, we can classify all smashing localizations and all hereditary cotorsion pairs with the right hand class closed under unions of chains. Using the representation theory of special biserial algeras, one can readily compute the Auslander-Reiten quiver of $\Lambda_2$. It consists of a family $(\T_{(a:b)} \mid (a\!\!:\!\!b) \in \mathbb{P}^1(k))$ of homogeneous tubes, the corresponding quasi-simples being precisely the modules $M_{(a:b)}$ above. In addition, there is one more component, which we denote by $\C$, of the form
$$
\xymatrix{
&&&& \Lambda_2 \ar[dr]
\\
\ar@{.}[r] &
X_{-3} \ar@<0.5ex>[dr] \ar@<-0.5ex>[dr] &
&
X_{-1} \ar@<0.5ex>[dr] \ar@<-0.5ex>[dr] \ar[ur] \ar@{.>}[ll]_\tau &
&
X_1 \ar@<0.5ex>[dr] \ar@<-0.5ex>[dr] \ar@{.>}[ll]_\tau &
&
X_3 \ar@<0.5ex>[dr] \ar@<-0.5ex>[dr] \ar@{.>}[ll]_\tau &
\ar@{.>}[l]
\\
\ar@<0.5ex>[ur] \ar@<-0.5ex>[ur] &
&
X_{-2} \ar@<0.5ex>[ur] \ar@<-0.5ex>[ur] \ar@{.>}[ll]^\tau &
&
X_0 \ar@<0.5ex>[ur] \ar@<-0.5ex>[ur] \ar@{.>}[ll]^\tau &
&
X_2 \ar@<0.5ex>[ur] \ar@<-0.5ex>[ur] \ar@{.>}[ll]^\tau &
&
\ar@{.>}[ll]^\tau
}
$$
where $X_0$ is the unique simple module, and $X_n$ and $X_{-n}$ are the string modules corresponding to the strings $(yx^{-1})^n$ and $(x^{-1}y)^n$, respectively. In particular, $\dim_k X_n = 2\cdot|n| + 1$. It is easy to compute that $\Omega^-(X_n) \cong X_{n+1}$ and $\Omega^-(M) = M$ for each indecomposable finite dimensional module in a tube. This describes the restriction of the suspension functor $\Sigma: \stModl{\Lambda_2} \to \stModl{\Lambda_2}$ to $\stmodl{\Lambda_2}$.

We recall that a full triangulated subcategory $\X_0$ of $\stmodl{\Lambda_2}$ is called \emph{thick} if it is closed under direct summands. There is a bijective correspondence between thick subcategories $\X_0$ of $\stmodl{\Lambda_2}$ and localizing subcategories $\X$ of $\stModl{\Lambda_2}$ generated by a set of compact objects. More precisely, if $\X$ is generated by $\X_0 \subseteq \stmodl{\Lambda_2}$ and $\X_0$ is thick, then $\X \cap \stmodl{\Lambda_2} = \X_0$, \cite[2.2]{N1}. It is clear that each thick subcategory is uniquely determined by its indecomposable objects.

\begin{sloppypar}
We will now describe thick subcategories of $\stmodl{\Lambda_2}$. It is straightforward to check that if an indecomposable non-injective module $M \in \modl{\Lambda_2}$ is contained in a thick subcategory $\X_0$, then all modules in the same component of the Auslander-Reiten quiver are in $\X_0$, too. On the other hand, if $\T_p$ is a tube for some $p \in \mathbb{P}^1(k)$, then one can check that in $\modl{\Lambda_2}$, the additive closure of $\T_p \cup \{\Lambda_2\}$ equals to
$$
\{ X \in \modl{\Lambda_2} \mid
\underline{\Hom}_{\Lambda_2}(X,\T_q) = 0
= \underline{\Hom}_{\Lambda_2}(\T_q,X)
\;\; (\forall q \in \mathbb{P}^1(k) \setminus \{p\})
\}
$$
Therefore, $\add (\T_p \cup \{\Lambda_2\})$ is closed under extensions, syzygies and cosyzygies in $\modl{\Lambda_2}$, and consequently $\add \T_p$ is thick in $\stmodl{\Lambda_2}$. It is easy to see that $\underline{\Hom}_{\Lambda_2}(\T_p,\T_q) = 0$ for $p \ne q$, so the additive closure of any set of tubes is thick in $\stmodl{\Lambda_2}$. Finally, there is an exact sequence $0 \to M \to X_m \to X_{m+1} \to 0$ for each $m < 0$ and each quasi-simple module $M$ in a tube; hence a thick subcategory containing the component $\C$ contains all the tubes, too. When summarizing all the facts (and using Theorem~\ref{thm:tcmc2_rad0}), we obtain the following classification:
\end{sloppypar}

\begin{prop}
Let $k$ be an algebraically closed field, $\Lambda_2 = k\spanned{x,y}/(x^2,y^2,xy+yx)$, and $\C$ and $\T_p$, $p \in \mathbb{P}^1(k)$, be the components of the Auslander-Reiten quiver of $\Lambda_2$ as above. Then each smashing localizing class $\X$ in $\stModl{\Lambda_2}$ is generated by $\X_0 = \X \cap \stmodl{\Lambda_2}$, and the possible intersections $\X_0$ are classified as follows:
\begin{enumerate}
\item $\X_0 = 0$; or
\item $\X_0$ is the additive closure of $\bigcup_{p \in P} \T_p$ for some $P \subseteq \mathbb{P}^1(k)$; or
\item $\X_0 = \stmodl{\Lambda_2}$.
\end{enumerate}
\end{prop}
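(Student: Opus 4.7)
The plan is to combine Theorem~\ref{thm:tcmc2_rad0} with the structural observations already assembled in the preceding paragraphs, and then classify thick subcategories of $\stmodl{\Lambda_2}$ by an inspection of the Auslander-Reiten quiver described above.

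First, since $\rad_{\Lambda_2}^* = 0$ was noted at the start of the example, Theorem~\ref{thm:tcmc2_rad0} gives that every smashing localizing subcategory $\X \subseteq \stModl{\Lambda_2}$ is generated by a set of compact objects. Combined with the Neeman-type correspondence~\cite[2.2]{N1} recalled in the excerpt, this yields that $\X_0 = \X \cap \stmodl{\Lambda_2}$ is thick, that $\X$ is the localizing subcategory it generates, and that the map $\X \mapsto \X_0$ is injective. Thus the problem reduces to classifying the thick subcategories of $\stmodl{\Lambda_2}$ in terms of which indecomposables they contain.

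The next step is the Auslander-Reiten closure property already used in the excerpt: if a non-injective indecomposable $M \in \X_0$, then every indecomposable in the same AR-component lies in $\X_0$. This follows from the fact that each AR-sequence in $\modl{\Lambda_2}$ descends to an AR-triangle in $\stmodl{\Lambda_2}$, together with closure of a thick subcategory under summands and under the third term of any triangle; iterating along the arrows of the component (whose shape is explicit from the diagram for $\C$ and from the homogeneity of each $\T_p$) covers all its indecomposables.

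With component closure in hand, the classification splits cleanly. If $\X_0 \cap \C = 0$, then $\X_0$ sits inside the additive closure of the tubes, and the Hom-orthogonality $\underline{\Hom}_{\Lambda_2}(\T_p,\T_q) = 0$ for $p \ne q$ together with component closure forces $\X_0 = \add \bigcup_{p\in P}\T_p$ where $P = \{p \in \mathbb{P}^1(k) \mid \T_p \subseteq \X_0\}$; thickness of each such additive closure was already verified in the paragraph preceding the statement. Otherwise $\X_0$ contains some indecomposable of $\C$, hence all of $\C$; the short exact sequence $0 \to M \to X_m \to X_{m+1} \to 0$ (with $m<0$) becomes a triangle in $\stmodl{\Lambda_2}$ in which both $X_m, X_{m+1} \in \X_0$, so every quasi-simple $M$ in every tube lies in $\X_0$, and component closure inside each tube yields $\X_0 = \stmodl{\Lambda_2}$. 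The main technical obstacle is the careful bookkeeping around the projective-injective $\Lambda_2$ sitting inside $\C$ (which becomes zero stably) and the verification that the AR-triangle closure can be propagated across the full component $\C$; once these points are handled, the three listed alternatives drop out.
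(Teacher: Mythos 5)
Your proposal is correct and follows essentially the same route as the paper's argument, which is laid out informally in the paragraph preceding the proposition: reduce to classifying thick subcategories of $\stmodl{\Lambda_2}$ via Theorem~\ref{thm:tcmc2_rad0} and \cite[2.2]{N1}, then combine Auslander--Reiten component closure, stable Hom-orthogonality of the tubes, and the exact sequences $0 \to M \to X_m \to X_{m+1} \to 0$ linking $\C$ to the quasi-simples. The only minor quibble is that the Hom-orthogonality $\underline{\Hom}_{\Lambda_2}(\T_p,\T_q)=0$ is used to verify that $\add\bigcup_{p\in P}\T_p$ is \emph{thick} (so that every $P$ is realizable), not to derive the form of $\X_0$, which follows from component closure alone; but this does not affect the correctness of the argument.
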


In the same spirit, we can classify the hereditary cotorsion pairs $(\A,\B)$ in $\Modl{\Lambda_2}$ such that $\B$ is closed under unions of chains. Recall that a subcategory $\A_0$ of $\modl{\Lambda_2}$ is called \emph{resolving} if it contains $\Lambda_2$ and it is closed under extensions, kernels of epimorphisms and direct summands. There is a bijective correspondence between resolving subcategories $\A_0$ in $\modl{\Lambda_2}$ and hereditary cotorsion pairs $(\A,\B)$ of finite type in $\Modl{\Lambda_2}$, \cite[2.5]{AT}. Note that if $\A_0$ is resolving and contains a module $X_m \in \C$, it must contain all $X_z$, $z\le m$, and all tubes. On the other hand, it is not difficult to see that there is an exact sequence $0 \to X_n \to U \to X_{-k} \to 0$ with an indecomposable (string) module $U$ from a tube for each $n,k > 0$. Hence $\A_0$ must contain all of $\C$, too. We will leave details of the following statement (using Theorem~\ref{thm:tcmc_rad0}) for the reader:

\begin{prop}
\begin{sloppypar}
Let $k$ be an algebraically closed field, $\Lambda_2 = k\spanned{x,y}/(x^2,y^2,xy+yx)$, and $\C$ and $\T_p$, $p \in \mathbb{P}^1(k)$, be the components of the Auslander-Reiten quiver of $\Lambda_2$ as above. Let $(\A,\B)$ be a hereditary cotorsion pair in $\Modl{\Lambda_2}$ \st $\B$ is closed under unions of chains, and let $\A_0 = \A \cap \modl{\Lambda_2}$. Then $\B = \Ker \Ext^1_{\Lambda_2}(\A_0,-)$, and the possible classes $\A_0$ are classified as follows:
\end{sloppypar}
\begin{enumerate}
\item $\A_0 = \add\{\Lambda_2\}$; or
\item $\A_0$ is the additive closure of $\{\Lambda_2\} \cup \bigcup_{p \in P} \T_p$ for $P \subseteq \mathbb{P}^1(k)$; or
\item $\A_0 = \modl{\Lambda_2}$.
\end{enumerate}
\end{prop}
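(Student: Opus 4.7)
The plan is to reduce the classification of cotorsion pairs to the classification of resolving subcategories of $\modl{\Lambda_2}$ via Theorem~\ref{thm:tcmc_rad0}, and then to perform a case analysis guided by the Auslander--Reiten quiver of $\Lambda_2$ described in the preceding example.

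Since $\rad_{\Lambda_2}^* = 0$, Theorem~\ref{thm:tcmc_rad0} guarantees that every hereditary cotorsion pair $(\A,\B)$ in $\Modl{\Lambda_2}$ with $\B$ closed under unions of well ordered chains is of finite type. The standard bijection~\cite[Theorem 2.5]{AT} between hereditary cotorsion pairs of finite type in $\Modl{\Lambda_2}$ and resolving subcategories of $\modl{\Lambda_2}$ (already invoked in the discussion preceding the statement) then yields both the formula $\B = \Ker \Ext^1_{\Lambda_2}(\A_0,-)$ and the assignment $(\A,\B) \mapsto \A_0 = \A \cap \modl{\Lambda_2}$. The task is therefore to enumerate the resolving subcategories $\A_0$ of $\modl{\Lambda_2}$.

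Every such $\A_0$ contains $\Lambda_2$ and is closed under direct summands, so it is determined by its collection of non-projective indecomposables. I would split according to which components of the Auslander--Reiten quiver it meets. If it meets none, one is in case~(1). If $\A_0$ contains some indecomposable $M_{p,n} \in \T_p$, iterated closure under kernels of epimorphisms, applied along the canonical epimorphisms $M_{p,\ell+1} \twoheadrightarrow M_p$ (whose kernels are $M_{p,\ell}$), places the quasi-simple $M_p$ in $\A_0$, and closure under extensions along $0 \to M_p \to M_{p,\ell+1} \to M_{p,\ell} \to 0$ then produces every indecomposable of $\T_p$; hence $\A_0 \cap \T_p$ is either empty or the whole tube. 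Conversely, for any $P \subseteq \mathbb{P}^1(k)$ the additive closure of $\{\Lambda_2\} \cup \bigcup_{p \in P} \T_p$ is resolving: closure under direct summands is immediate, and closure under extensions and kernels follows from the pairwise $\underline{\Hom}$- and $\Ext^1$-orthogonality of distinct tubes (recorded in the discussion) together with each tube being closed under syzygies, which for $\Lambda_2$ one checks directly from the presentation $M_p = \Lambda_2/\Lambda_2(ax+by)$, obtaining even $\Omega M \cong M$ for every indecomposable $M$ in a tube. This realizes case~(2).

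Finally, suppose $\A_0$ contains some $X_m \in \C$. As remarked in the discussion before the statement, closure under kernels of epimorphisms applied to the meshes of $\C$ forces $X_z \in \A_0$ for every $z \le m$. The special biserial structure of $\Lambda_2$ then furnishes, for each $p \in \mathbb{P}^1(k)$, a short exact sequence $0 \to X_n \to U \to X_{-k} \to 0$ with $n,k > 0$ and $U$ an indecomposable string module in $\T_p$; choosing $n,k$ so that both $X_n$ and $X_{-k}$ already lie in $\A_0$, closure under extensions places $U \in \A_0$, and the tube argument from case~(2) then forces $\T_p \subseteq \A_0$. Using the same type of sequences once more, but now extracting $X_n$ as the kernel of the surjection $U \to X_{-k}$, one recovers $X_n \in \A_0$ for every $n > 0$, so that $\C \subseteq \A_0$ and hence $\A_0 = \modl{\Lambda_2}$, which is case~(3). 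The main delicate point will be to confirm uniformly in $p$ and in the sign of $m$ that the hinted short exact sequences suffice to reach every tube and every module of $\C$; this is a combinatorial verification using the explicit list of strings and bands for $\Lambda_2$, but once that data is assembled the remainder is routine resolving-closure bookkeeping.
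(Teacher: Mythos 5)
Your overall strategy coincides with the paper's sketch: Theorem~\ref{thm:tcmc_rad0} reduces to cotorsion pairs of finite type, \cite[2.5]{AT} converts the problem into classifying resolving subcategories of $\modl{\Lambda_2}$, and your analysis of case~(2) (a resolving subcategory meeting a tube contains the whole tube, and the additive closure of $\{\Lambda_2\}$ and any set of tubes is resolving) is correct. There is, however, a genuine gap in case~(3), at the step passing from ``$\A_0$ contains some $X_m \in \C$'' to ``$\A_0$ contains every tube''. You propose to obtain an indecomposable tube module $U$ as the \emph{middle term} of $0 \to X_n \to U \to X_{-k} \to 0$, ``choosing $n,k$ so that both $X_n$ and $X_{-k}$ already lie in $\A_0$''. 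At that point you only know $X_z \in \A_0$ for $z \le m$; if $m \le 0$ there is no positive $n$ with $X_n \in \A_0$, so no admissible choice exists and the extension step never gets off the ground. The intended route is different: for each $m<0$ and each quasi-simple $M$ of each tube there is a short exact sequence $0 \to M \to X_m \to X_{m+1} \to 0$, and reading these via closure under \emph{kernels of epimorphisms} (both $X_m$ and $X_{m+1}$ lie in $\A_0$ once $m+1$ is at most the given index) places every quasi-simple, hence by your own tube argument every tube, into $\A_0$. Only afterwards are the sequences $0 \to X_n \to U \to X_{-k} \to 0$ used, and only in the kernel direction, to recover $X_n$ for $n>0$.

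Two smaller points. First, to get $X_z \in \A_0$ for all $z \le m$ you invoke ``kernels of epimorphisms applied to the meshes of $\C$'', but the middle terms $X_{m-1}^2$ (possibly $\oplus\,\Lambda_2$) of the meshes are not yet known to lie in $\A_0$, so that closure property does not apply to them; use syzygies instead: since $\Omega^- X_n \cong X_{n+1}$, the module $X_{m-1} \cong \Omega X_m$ is the kernel of the projective cover $P \twoheadrightarrow X_m$ with $P \in \add\{\Lambda_2\} \subseteq \A_0$. Second, in your final step you need the sequence $0 \to X_n \to U \to X_{-k} \to 0$ only with $U$ in \emph{some} tube (that is all the kernel argument requires), whereas earlier you ask for one in each prescribed $\T_p$; the stronger statement is true (e.g.\ by the transitive action of $GL_2(k)$ on the quasi-simples $M_{(a:b)}$), but once the tubes are obtained as above it is not needed.
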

\end{expl}

\begin{expl}
A recipe for construction of more complicated examples is given in~\cite{KeS}. Let $B$ be a representation-infinite tilted algebra of Euclidean type over an algebraically closed field and $\hat B$ be its repetitive algebra. Put $\Lambda = \hat B/G$ where $G$ is an admissible infinite cyclic group of $k$-linear automorphisms of $\hat B$ (see~\cite[\S 1]{Sk2} for unexplained terminology). Then $\Lambda$ is selfinjective and $\radL^* = 0$ by the main result of~\cite{KeS}.

We illustrate the construction on $B = k(\cdot\!\rightrightarrows\!\cdot)$, the Kronecker algebra. The repetitive algebra $\hat B$ is then given by the following infinite quiver with relations:
$$
\xymatrix@1{
\ar@{.}[r] &
\cdot \ar@<0.5ex>[r]^{x_0} \ar@<-0.5ex>[r]_{y_0} &
\cdot \ar@<0.5ex>[r]^{x_1} \ar@<-0.5ex>[r]_{y_1} &
\cdot \ar@<0.5ex>[r]^{x_2} \ar@<-0.5ex>[r]_{y_2} &
\cdot \ar@<0.5ex>[r]^{x_3} \ar@<-0.5ex>[r]_{y_3} &
\cdot \ar@{.}[r] &
}
$$
$$
x_{i+1} x_i - y_{i+1} y_i = 0, \quad x_{i+1} y_i = 0, \quad y_{i+1} x_i = 0
\qquad \textrm{for each $i \in \Z$.}
$$

Let $n \ge 1$ and $\bar q = (q_1, \dots, q_n)$ be an $n$-tuple of non-zero elements of $k$. It is not difficult to see that we get the algebra $\Lambda_{n,\bar q}$ described by the quiver and relations below as $\hat B/G$ for a suitable $G$:
$$
\xymatrix{
&
\cdot \ar@<0.5ex>[r]^{x_1} \ar@<-0.5ex>[r]_{y_1} &
\cdot \ar@<0.5ex>[rd]^{x_2} \ar@<-0.5ex>[rd]_{y_2} & \\
\cdot \ar@<0.5ex>[ur]^{x_n} \ar@<-0.5ex>[ur]_{y_n} &&&
\cdot \ar@<0.5ex>[d]^{x_3} \ar@<-0.5ex>[d]_{y_3} \\
\cdot \ar@<0.5ex>[u]^{x_{n-1}} \ar@<-0.5ex>[u]_{y_{n-1}}  \ar@{.}@/_2.5pc/[rrr] &&&
\cdot
\\
\\
}
$$
$$
x_{i+1} y_i + q_i y_{i+1} x_i = 0, \;\; x_{i+1} x_i = 0, \;\; y_{i+1} y_i = 0
\quad \textrm{for each $i \in \{1,2,\dots,n\}$.}
$$
The addition in indicies of arrows above is considered modulo $n$. It is easy to see that $\Lambda_{n,\bar q}$ is special biserial and there are exactly $n$ one-parametric families of indecomposable $\Lambda_{n,\bar q}$-modules in each even dimension. They correspond to the bands $x_i y_i^{-1}$. In fact, if $n=1$ and $q_1 = 1$, we get precisely the exterior algebra on a $2$-dimensional space.
\end{expl}



\begin{thebibliography}{99}
\bibitem{AR} J.\ Ad\'amek and J.\ Rosick\'y, 
	\textbf{Locally Presentable and Accessible Categories}, 
	London Math.\ Soc.\ Lect.\ Note Ser., Vol.\ 189,
	Cambridge Univ. Press, Cambridge, 1994.

\bibitem{AST} L.\ Angeleri H\" ugel, J.\ \v Saroch and J.\ Trlifaj,
	\emph{On the telescope conjecture for module categories},
	to appear in J.\ Pure Appl.\ Algebra.

\bibitem{AT} L.\ Angeleri H\" ugel and J.\ Trlifaj,
	\emph{Direct limits of modules of finite projective dimension},
	Rings, Modules, Algebras, and Abelian Groups, LNPAM \textbf{236} (2004), 27--44.

%
%
\bibitem{BIK} D.\ Benson, S.\ Iyengar and H.\ Krause,
	\emph{Local cohomology and support for triangulated varieties},
	preprint.

\bibitem{CMMS1}
	F.\ U.\ Coelho, E.\ N.\ Marcos, H.\ Merklen and A.\ Skowro\'nski,
	\emph{Domestic semiregular branch enlargements of tame concealed algebras},
	Representation theory of algebras (Cocoyoc, 1994), 141--154,
	CMS Conf.\ Proc., 18, Amer.\ Math.\ Soc., Providence, RI, 1996.

\bibitem{CMMS2}
	F.\ U.\ Coelho, E.\ N.\ Marcos, H.\ Merklen and A.\ Skowro\'nski,
	\emph{Module categories with infinite radical cube zero},
	J.\ Algebra \emph{183} (1996), no. 1, 1--23.

\bibitem{CB} W.\ W.\ Crawley-Boevey,
	\emph{On tame algebras and bocses},
	Proc.\ London Math.\ Soc.\ (3) \textbf{56} (1988), no.\ 3, 451--483.

%
\bibitem{FL} L.\ Fuchs and S.\ B.\ Lee,
	\emph{From a single chain to a large family of submodules},
	Port.\ Math.\ (N.S.) \textbf{61} (2004), no.\ 2, 193--205.

\bibitem{Gei} W.\ Geigle,
	\emph{The Krull-Gabriel dimension of the representation theory of a tame hereditary Artin algebra and applications to the structure of exact sequences},
	Manuscripta Math.\ \textbf{54} (1985), 83--106.

\bibitem{GM} S.\ I.\ Gelfand and Y.\ I.\ Manin,
	\textbf{Methods of homological algebra},
	Second edition. Springer Monographs in Mathematics.
	Springer-Verlag, Berlin, 2003.

\bibitem{GP} I.\ M.\ Gelfand and V.\ A.\ Ponomarev,
	\emph{Indecomposable representations of the Lorentz group},
	Uspehi Mat. Nauk \textbf{23} 1968 no.\ 2 (140), 3--60;
	translated in Russian Math.\ Surveys, 23 (2) (1968), 1--58.

\bibitem{GT} R.\ G\"obel and J.\ Trlifaj,
	\textbf{Approximations and endomorphism algebras of modules},
	de Gruyter Expositions in Mathematics \textbf{41},
	Berlin-New York 2006.

\bibitem{H} D.\ Happel,
	\textbf{Triangulated categories in the representation theory of finite-dimensional algebras},
	London Mathematical Society Lecture Note Series, 119. Cambridge University Press, Cambridge, 1988.

\bibitem{Kel} B.\ Keller,
	\emph{A remark on the generalized smashing conjecture},
	Manuscripta Math.\ \textbf{84} (1994), no.\ 2, 193--198.

%
\bibitem{KeS} O.\ Kerner and A.\ Skowro\'nski,
	\emph{On module categories with nilpotent infinite radical},
	Compositio Math.\ \textbf{77} (1991), no.\ 3, 313--333.

\bibitem{K2} H.\ Krause,
	\emph{Cohomological quotients and smashing localizations},
	Amer.\ J.\ Math.\ \textbf{127} (2005), no.\ 6, 1191--1246.

\bibitem{K4} H.\ Krause,
	\emph{Derived categories, resolutions, and Brown representability},
	lecture notes for the summer school
	``Interactions between Homotopy Theory and Algebra''
	at the University of Chicago, 2004.

\bibitem{K1} H.\ Krause,
	\emph{Smashing subcategories and the telescope conjecture---an
	algebraic approach},
	Invent.\ Math.\ \textbf{139} (2000), no.\ 1, 99--133.

\bibitem{K3} H.\ Krause,
	\emph{The spectrum of a module category},
	Mem.\ Amer.\ Math.\ Soc.\ \textbf{149} (2001), no. 707.

\bibitem{KrS} H.\ Krause and \O.\ Solberg,
	\emph{Applications of cotorsion pairs},
	 J.\ London Math.\ Soc.\ (2) \textbf{68} (2003), no.\ 3, 631--650.

\bibitem{N1} A.\ Neeman,
	\emph{The connection between the $K$-theory localization theorem of
	Thomason, Trobaugh and Yao and the smashing subcategories of Bousfield
	and Ravenel},
	Ann.\ Sci.\ \'Ecole Norm.\ Sup.\ (4) \textbf{25} (1992), no.\ 5, 547--566.

\bibitem{P2} M.\ Prest,
	\textbf{Model theory and modules},
	London Mathematical Society Lecture Note Series 130. Cambridge University Press, Cambridge, 1988.

\bibitem{P} M.\ Prest,
	\emph{Morphisms between finitely presented modules and infinite-dimensional representations},
	Algebras and modules, II (Geiranger, 1996), 447--455, CMS Conf.\ Proc., 24,
	Amer.\ Math.\ Soc., Providence, RI, 1998.

\bibitem{PS}
	M.\ Prest and J.\ Schr\"oer,
	\emph{Serial functors, Jacobson radical and representation type},
	J. Pure Appl. Algebra 170 (2002), no. 2-3, 295--307.

\bibitem{SS} J.\ \v Saroch and J.\ \v S\v tov\'\i\v cek,
	\emph{The Countable Telescope Conjecture for Module Categories},
	preprint, arXiv:0801.3936.

\bibitem{SaT} J.\ \v Saroch and J.\ Trlifaj,
	\emph{Completeness of cotorsion pairs},
	to appear in Forum Math.\ \textbf{19} (2007).

\bibitem{S} J.\ Schr\"oer,
	\emph{On the infinite radical of a module category},
	Proc.\ London Math.\ Soc.\ (3) \textbf{81} (2000), no.\ 3, 651--674.

\bibitem{Sk}
	A.\ Skowro\'nski,
	\emph{Cycle-finite algebras},
	J.\ Pure Appl.\ Algebra \textbf{103} (1995), no.\ 1, 105--116.

\bibitem{Sk2}
	A.\ Skowro\'nski,
	\emph{Selfinjective algebras of polynomial growth},
	Math.\ Ann.\ \textbf{285} (1989), no.\ 2, 177--199. 

\bibitem{SW}
	A.\ Skowro\'nski and J.\ Waschb\"usch,
	\emph{Representation-finite biserial algebras},
	J.\ Reine Angew.\ Math.\ \textbf{345} (1983), 172--181.

\bibitem{ST} J.\ \v S\v tov\'\i\v cek and J.\ Trlifaj,
	\emph{Generalized Hill lemma, Kaplansky theorem for cotorsion pairs,
	and some applications},
	to appear in Rocky Mountain J.\ Math.

\bibitem{T} J.\ Trlifaj,
	\emph{Cotorsion theories induced by tilting and cotilting modules},
	Abelian groups, rings and modules (Perth, 2000), 285--300,
	Contemp. Math., \textbf{273}, Amer. Math. Soc., Providence, RI, 2001.
\end{thebibliography}
\end{document}